\theoremstyle{plain}
\newtheorem{theorem}{Theorem}[section]
\newtheorem{lemma}[theorem]{Lemma}
\newtheorem{proposition}[theorem]{Proposition}
\theoremstyle{definition}
\newtheorem{remark}[theorem]{Remark}
\newtheorem{definition}[theorem]{Definition}
\numberwithin{equation}{section}
\def\R{\mathbb R}
\def\la{\lambda}
\begin{document}

\baselineskip=15pt    

\title{Boundedness of the Hilbert transform on weighted Lorentz spaces} 
\author {Elona Agora, Mar\'{\i}a J. Carro, and Javier Soria} 

\address{E. Agora, Departament de
Matem\`atica Aplicada i~An\`alisi, Universitat de Barcelona, 
 08007 Barcelona, Spain.}
\email{elona.agora@gmail.com}

\address{M. J. Carro, Departament de
Matem\`atica Aplicada i~An\`alisi, Universitat de Barcelona, 
 08007 Barcelona, Spain.}
\email{carro@ub.edu}

\address{J. Soria, Departament de
Matem\`atica Aplicada i~An\`alisi, Universitat de Barcelona, 
 08007 Barcelona, Spain.}
\email{soria@ub.edu}

\subjclass[2010]{26D10, 42EB10}
\keywords{Weighted Lorentz spaces, Hilbert transform, Muckenhoupt weights, $B_p$ weights}
\thanks{This work was  partially supported by the Spanish Government Grant MTM2010-14946} 

\begin{abstract} We study the boundedness of the Hilbert transform $H$ and the  Hilbert maximal operator $H^*$ on weighted Lorentz spaces $\Lambda^p_u(w)$. We start by giving several necessary conditions that, in particular,  lead us to the complete characterization of the weak-type boundedness of both $H$ and $H^*$,  whenever $u\in A_1$. For the strong-type case, we also get the  characterization of both operators when  $p>1$. Applications to the case of Lorentz spaces  $L^{p,q}(u)$ are presented. 
\end{abstract}


\maketitle

\pagestyle{headings}\pagenumbering{arabic}\thispagestyle{plain}

\markboth{Boundedness of the Hilbert transform on weighted Lorentz spaces}{Elona Agora, Mar\'{\i}a J. Carro, and Javier Soria }

\section{Introduction and motivation}

In 1972,  Muckenhoupt \cite{m:m} gave the complete characterization of the boundedness of the Hardy-Littlewood maximal operator $M$ on  weighted Lebesgue spaces $L^p(u)$, with  $p>1$. Recall that 
$$
Mf(x)=\sup_{x\in I}{\frac{1}{|I|}}\int_{I} |f(y)|dy,
$$
where $I$ is an interval of the real line and the supremum is considered over all intervals containing $x\in \R$ and,  for a positive locally integrable function $u$ in $\mathbb R$, $L^p(u)$ is defined as the set of all Lebesgue measurable  functions $f$ such that
$$
||f||_{L^p(u)}=\bigg(\int_{\mathbb R} |f(x)|^p u(x) dx \bigg)^{1/p}<\infty.
$$
The characterization \cite{m:m}  was given in terms of the   $A_p$ condition:
$$
\sup_{I}\left(\frac{1}{|I|}\int_I u(x)dx\right) \left(\frac{1}{|I|} \int_I u^{-1/(p-1)}(x) dx \right)^{p-1}<\infty. 
$$
Similarly, if we consider the weak-type spaces $L^{p, \infty}(u)$ defined by 
$$
||f||_{L^{p, \infty}(u)}=\sup_{t>0} f_u^*(t) t^{1/p}<\infty,
$$
where   $f^*_u$ is the decreasing rearrangement of $f$ with respect to  $u$ (see \cite{bs:bs}), it was also proved in \cite{m:m}  that, if $p>1$,
$$
M:L^p(u)\longrightarrow L^p(u) \qquad\iff\qquad M:L^p(u)\longrightarrow L^{p,\infty}(u), 
$$
and that
$$
M:L^1(u)\longrightarrow L^{1,\infty}(u)
$$
is bounded if and only if $u\in A_1$; that is,
$$
Mu(x)\le C u(x), \qquad a.e. \  x\in\mathbb R. 
$$

Some years later, in \cite{am:am}, Ari\~ no and Muckenhoupt gave the complete characterization of the boundedness of $M$ on some weighted Lorentz spaces. These spaces were defined by Lorentz in \cite{l1:l1}
and \cite{l2:l2} as follows: let $\mathcal M(\mathbb R)$ be the class of Lebesgue measurable functions on $\mathbb R$. Then
$$
\Lambda^{p}_{u}(w) =\left\{f\in\mathcal M(\mathbb R): \, ||f||_{\Lambda^{p}_{u}(w)}=\left( \int_0^{\infty}  (f^*_u(t))^p w(t)dt\right) ^ {1/p}< \infty \right\},
$$
and 
$$
\Lambda^{p,\infty}_{u}(w) =\left\{f\in \mathcal M(\mathbb R): \, ||f||_{\Lambda^{p,\infty}_{u}(w)}=\sup_{t>0} W^{1/p}(t)f^*_u(t)<\infty \right\},
$$
with $u$ and $w$  positive locally integrable functions in $\mathbb R$ and $(0,\infty)$ respectively (these functions will be called weights from now on), and 
$$
W(r)=\int_0^r w(s) ds.
$$

The weighted Lorentz  spaces were deeply studied in \cite{crs:crs}. We list now   some facts that will be important for our purposes.

\begin{proposition}\label{properties}\ 

 \noindent
 (a) $\Lambda^{p}_{u}(w)$ and $\Lambda^{p,\infty}_{u}(w)$  are quasi-normed spaces if and only if $w$ satisfies  the $\Delta_2$ condition ($w\in \Delta_2$); that is, $W(2r)\lesssim W(r)$, $r>0$.

 \noindent
 (b)  Assume that $u\notin L^1(\mathbb R)$, $w\notin L^1(\mathbb R^+)$ and $w\in \Delta_2$.  If  $|g_n|\le |f|$, $f\in \Lambda^p_u(w)$ and  $\,\lim_{n\to \infty}g_n=g$ almost everywhere, then  $\lim_{n\to \infty}||g-g_n||_{\Lambda^p_u(w)}=0$.

 \noindent
 (c)  If $w\in \Delta_2$,   the class of simple functions with support in a set of finite measure,
$$
{\mathcal{S}}_0(u)=\Big\{f \, \in {\mathcal{M}}:\, \text{card}(f(\R))<\infty, \, u(\{f\neq 0\})<\infty\Big\}
$$
is dense in $\Lambda^p_u(w)$.
\end{proposition}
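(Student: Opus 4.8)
The plan is to treat the three parts in turn. The common analytic ingredient will be the subadditivity of the decreasing rearrangement, $(f+g)^*_u(t)\le f^*_u(t/2)+g^*_u(t/2)$ (see \cite{bs:bs}), together with the following elementary remark: if $h\ge 0$ is decreasing on $(0,\infty)$, then writing $h=\int_0^\infty\chi_{\{h>\lambda\}}\,d\lambda$ and $\{h>\lambda\}=(0,a_\lambda)$ one gets $\int_0^\infty h(s)\,w(2s)\,ds=\frac12\int_0^\infty W(2a_\lambda)\,d\lambda$ and $\int_0^\infty h(s)\,w(s)\,ds=\int_0^\infty W(a_\lambda)\,d\lambda$, so that $w\in\Delta_2$ yields $\int_0^\infty h(s)\,w(2s)\,ds\lesssim\int_0^\infty h(s)\,w(s)\,ds$.

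For part~(a), I would first observe that both functionals are positively homogeneous and, since $u>0$ and $w>0$ a.e., vanish only at $f=0$, so the point is the quasi-triangle inequality (which in turn forces $\Lambda^p_u(w)$ and $\Lambda^{p,\infty}_u(w)$ to be linear spaces). In the strong case I would combine the rearrangement inequality with $(a+b)^p\lesssim a^p+b^p$ and the substitution $t=2s$, and then apply the remark above with $h=(f^*_u)^p$ and $h=(g^*_u)^p$, obtaining $\|f+g\|_{\Lambda^p_u(w)}^p\lesssim\|f\|_{\Lambda^p_u(w)}^p+\|g\|_{\Lambda^p_u(w)}^p$; in the weak case the same rearrangement inequality together with $W(t)^{1/p}=W(2\cdot t/2)^{1/p}\lesssim W(t/2)^{1/p}$ gives the quasi-triangle inequality directly. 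For the converse, if $w\notin\Delta_2$ I would pick $r>0$ with $W(2r)/W(r)$ as large as desired and disjoint measurable sets $E,F$ with $u(E)=u(F)=r$ (possible because $u(x)\,dx$ is non-atomic); then $(\chi_E)^*_u=(\chi_F)^*_u=\chi_{(0,r)}$ and $(\chi_E+\chi_F)^*_u=\chi_{(0,2r)}$, so $\|\chi_E\|^p=\|\chi_F\|^p=W(r)$ while $\|\chi_E+\chi_F\|^p=W(2r)$ (and similarly with $W^{1/p}$ in the weak case), which is incompatible with any quasi-triangle inequality.

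For part~(b), from $|g_n|\le|f|$ and $g_n\to g$ a.e.\ we get $|g|\le|f|$, hence $g\in\Lambda^p_u(w)$, and $h_n:=|g-g_n|$ satisfies $0\le h_n\le 2|f|$ with $h_n\to 0$ a.e.; in particular $(h_n)^*_u\le 2f^*_u$ and $2^p f^*_u(t)^p w(t)\in L^1(0,\infty)$. By dominated convergence on $(0,\infty)$ it then suffices to prove $(h_n)^*_u(t)\to 0$ for each fixed $t>0$. Here I would use that $w\notin L^1(0,\infty)$ forces $f^*_u(\infty)=0$ for every $f\in\Lambda^p_u(w)$, so that $E:=\{|f|>\lambda/2\}$ has finite $u$-measure for every $\lambda>0$; since $\{h_n>\lambda\}\subseteq E$, $\chi_{\{h_n>\lambda\}}\to 0$ a.e.\ and $\chi_{\{h_n>\lambda\}}\le\chi_E\in L^1(u\,dx)$, dominated convergence gives $u(\{h_n>\lambda\})\to 0$, hence $u(\{h_n>\lambda\})\le t$ for $n$ large, i.e.\ $(h_n)^*_u(t)\le\lambda$ eventually; letting $\lambda\to 0$ finishes the step.

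For part~(c), using~(a) I would reduce to $f\ge 0$ and take $s_n=\min\bigl(n,2^{-n}\lfloor 2^n f\rfloor\bigr)$: then $0\le s_n\uparrow f$ pointwise, each $s_n$ is simple, and $\{s_n\ne 0\}\subseteq\{f\ge 2^{-n}\}$, a set of finite $u$-measure (again because $f^*_u(\infty)=0$), so $s_n\in\mathcal S_0(u)$; applying part~(b) with $g_n=s_n$, $g=f$ and dominating function $f$ gives $\|f-s_n\|_{\Lambda^p_u(w)}\to 0$, proving density. The step I expect to require most care is the interaction of the decreasing rearrangement with almost everywhere convergence in~(b): a.e.\ convergence by itself does not yield convergence of the rearrangements, and it is precisely the domination by $2|f|$ together with the finiteness $u(\{|f|>\lambda\})<\infty$ (ensured by $w\notin L^1$) that reduces the matter to a routine dominated convergence argument for the distribution functions; in~(a) the minor technical point is the availability of the test sets of prescribed $u$-measure, guaranteed by the absolute continuity of $u(x)\,dx$.
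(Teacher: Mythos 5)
The paper offers no proof of this proposition: it is quoted verbatim from \cite{crs:crs} as background, so there is no internal argument to compare with. Your proof is, in substance, a correct self-contained version of the standard arguments: for (a), subadditivity $(f+g)^*_u(t)\le f^*_u(t/2)+g^*_u(t/2)$ plus the layer-cake identity reducing $\int_0^\infty h(s)w(2s)\,ds$ to $\tfrac12\int_0^\infty W(2a_\lambda)\,d\lambda$ for decreasing $h$; for (b), the observation that $w\notin L^1(\mathbb R^+)$ forces $f^*_u(\infty)=0$, so that $u(\{h_n>\lambda\})\to0$ by dominated convergence in $L^1(u)$, followed by dominated convergence in $\int_0^\infty(\cdot)^pw$; for (c), dyadic simple approximation $s_n\uparrow f$ with $\{s_n\neq0\}\subset\{f\ge2^{-n}\}$ of finite $u$-measure, fed into (b). All of these steps check out, including the counterexample with disjoint sets of equal $u$-measure for the necessity of $\Delta_2$.

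Two hypothesis points should be made explicit. First, in the necessity half of (a) you need disjoint sets $E,F$ with $u(E)=u(F)=r$ for arbitrarily large $r$; non-atomicity of $u\,dx$ only yields values $r\le u(\mathbb R)/2$, so your argument (and indeed the literal ``for all $r>0$'' statement) requires $u\notin L^1(\mathbb R)$, otherwise it only gives $\Delta_2$ on $(0,u(\mathbb R))$ --- harmless here, since Theorem \ref{NC}(a) guarantees $u\notin L^1$ wherever the proposition is applied, but it deserves a sentence. Second, part (c) is stated under $w\in\Delta_2$ alone, while your proof uses $w\notin L^1(\mathbb R^+)$ twice: to get $u(\{f\ge2^{-n}\})<\infty$ and to invoke part (b). This is a genuine mismatch with the statement as written; note, however, that some such extra assumption is actually needed (for $u=1$ and $w(t)=e^{-t}\in\Delta_2$, the constant function $1$ belongs to $\Lambda^p(w)$ and has distance at least $W(\infty)^{1/p}$ from every element of $\mathcal S_0$), and in the only place the paper uses (c), namely Theorem \ref{realdensity}, the hypotheses $u\notin L^1(\mathbb R)$ and $w\notin L^1(\mathbb R^+)$ are in force, so your proof covers exactly what the paper needs; you should simply state these extra assumptions in (c) rather than leave them implicit. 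Incidentally, your proof of (b) never uses $u\notin L^1(\mathbb R)$, so you in fact prove a slightly stronger statement there, which is fine.
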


The result proved in \cite{am:am} (see also, for example,  \cite{cgs:cgs}, \cite{cs:cs}) corresponds to  the case $u=1$ and is the following: 
$$
M:\Lambda^{p}(w) \longrightarrow \Lambda^{p}(w)
$$
is bounded if and only if $w\in B_p$; that is, 
$$
 \int_r^{\infty}\left(\frac{r}{t}\right)^pw(t) \, dt \lesssim  \int_0^rw(s) ds, 
$$
for every $r>0$. 
Moreover, if $p>1$ this condition also characterizes the weak-type boundedness:
$$
M:\Lambda^{p}(w) \longrightarrow \Lambda^{p, \infty}(w)
$$
and, if $p\le 1$,  the weak-type boundedness holds if and only  if $w$ is $p$ quasi-concave; that is,  there exists $C>0$ such that, for every $0<r<t<\infty$,
$$
\frac{W(t)}{t^p}\le C \frac{W(r)}{r^p}.
$$
Some important facts about these classes of weights are the following \cite{am:am,   crs:crs, n1:n1}: by definition, we say that $w\in B_{p, \infty}$ if and only if the Hardy operator  
$$
Pf(t)=\frac{1}{t}\int^t_{0}f(s) \, ds,
$$
satisfies that 
$$
P:L^p_{{\rm dec}}(w) \rightarrow  L^{p,\infty}(w)
$$
is bounded, where  $L^p_{{\rm dec}}(w)$ is the set of decreasing functions on $L^p(w)$. Then: 

\begin{enumerate}

\item[{(i)}] If $p>1$,  $B_p=B_{p, \infty}$.

\item[{(ii)}] For every $p\le 1$, $w\in B_{p, \infty}$ if and only if $w$ is $p$ quasi-concave. 

\item[{(iii)}] For every $p>0$,  $w\in B_{p}$ if and only if
 $$
P:L^p_{{\rm dec}}(w) \rightarrow  L^{p}(w)
$$
is bounded. 
\end{enumerate}
\medskip

Recently, the complete characterization of the boundedness of 
$$
M:\Lambda^{p}_u(w) \longrightarrow \Lambda^{p}_u(w)
$$
was proved  in \cite{crs:crs} by means of the following condition: there exists $q\in(0,p)$ such that,  for every finite family of intervals $(I_j)_{j=1}^J$, and every family of measurable sets $(S_j)_{j=1}^{J}$, with $S_j\subset I_j$, for every $j$,  we have that
\begin{equation}\label{maximal}
\frac{W\left(u\left(\bigcup_{j=1}^J I_j\right)\right)}{W\left(u\left(\bigcup_{j=1}^J S_j\right)\right)}
       \lesssim  \max_{1\leq j\leq J} \left(\frac{|I_j|}{|S_j|}\right)^q, 
\end{equation}
where, for a measurable set $E$, we write $u(E)$ to denote $\int_E u(x) dx$, 
and  as usual, we use the symbol $A\lesssim B$ to indicate that there exists a universal constant $C>0$, independent of all important parameters, such that $A\le C B$ and $A\approx B$ indicates that both $A\lesssim B$  and $B\lesssim A$ hold. This is   the standard notation that we will also use throughout this paper.

Similar to the case of the maximal operator $M$, there is also a corresponding theory for the Hilbert transform (or, more generally, for singular integral operators). The Hilbert transform is defined by:
$$
Hf(x)=\frac{1}{\pi} \lim_{\varepsilon\to 0^+} \int_{|x-y| > \varepsilon} \frac{f(y)}{x-y}\,dy,
$$
whenever this limit exists almost everywhere. We know that this is the case for simple functions
$$
{\mathcal{S}}(u)=\Big\{f \, \in  {\mathcal{M}}:\, \text{card}(f(\R))<\infty\Big\},
$$
or for $\mathcal{C}^\infty$ functions with compact support,  ${\mathcal{C}}^{\infty}_c$. We shall also consider the  Hilbert maximal operator 
$$
H^*f(x)=\frac{1}{\pi} \sup_{\varepsilon> 0} \left|\int_{|x-y| > \varepsilon} \frac{f(y)}{x-y}\,dy\right|. 
$$

Our main goal  is to study the boundedness of the Hilbert transform and of the Hilbert maximal operator on weighted Lorentz spaces
\begin{equation} \label{Hilbertantecedents}
H, H^*: \Lambda^p_u(w) \longrightarrow\Lambda^p_u(w)
\end{equation}
and also its weak version $H, H^*:\Lambda^p_u(w) \longrightarrow\Lambda^{p, \infty}_u(w)$. We shall give first some general necessary conditions and obtain,  as a consequence,  the characterization of (\ref{Hilbertantecedents}) and its weak-type version under the hypothesis that $u\in A_1$.  

As in  the case of the Hardy-Littlewood maximal operator, the cases $w=1$ and $u=1$ are already known:

\noindent
(i) 
  If $w=1$, \eqref{Hilbertantecedents} is equivalent to the fact that 
$$
H:L^p(u) \to L^p(u)
$$
is bounded and this problem was completely solved by Hunt, Muckenhoupt,  and Wheeden in \cite{mhw:mhw}. An alternative proof was provided in \cite{cf:cf} by Coifman and Fefferman and the condition is  that $u\in A_p$, if $p>1$. This property also characterizes the weak-type boundedness:
$$
H, H^*:L^p(u) \to L^{p,\infty}(u),
$$
and, if $p=1$, 
$$
H, H^*:L^1(u) \to L^{1,\infty}(u)
$$
is bounded if and only if $u\in A_1$. Moreover, it is known that if 
\begin{equation}\label{ff}
H, H^*:L^p(u) \to L^{p,\infty}(u)
\end{equation}
is bounded, then necessarily $p\ge 1$; that is,  there are no weights $u$ for which the boundedness holds for some $p<1$. 
This follows from the fact that (\ref{ff}) implies that, for every measurable set $E\subset I$, 
$$
\frac{u(I)}{u(E)} \le C \bigg( \frac {|I|}{|E|}\bigg)^p
$$
and if $p<1$, the Lebesgue differentiation theorem would imply that $u=0$  (see \cite{mhw:mhw}).

\noindent
(ii) On the other hand, if $u=1$, the characterization of \eqref{Hilbertantecedents} is equivalent to the boundedness of
$$
H, H^*:\Lambda^p(w) \longrightarrow  \Lambda^p(w),
$$
given by Sawyer in \cite{s:s}.  A simplified description of the class of weights that characterizes the above boundedness   is $B_p\cap B^*_{\infty}$,  where a weight  $w\in  B^*_{\infty}$ if 
\begin{equation}\label{binfty}
\int_0^r \frac{1}{t}\int_{0}^t w(s)ds \ dt \lesssim \int_{0}^r w(s)ds,
\end{equation}
for all $r>0$ (see \cite{n:n}). 

If we consider the weak-type boundedness 
$$
H, H^*:\Lambda^p(w) \longrightarrow  \Lambda^{p, \infty}(w),
$$
then the complete characterization is $w\in B_{p, \infty}\cap B^*_{\infty}$.
 \medskip
 
The paper is organized as follows: in Section 2, we shall present several necessary conditions to have  the weak-type boundedness of $H$ on $\Lambda^p_u(w)$. As a consequence, we prove in Section 3 two of the  main results of this paper,  Theorems \ref{a1} and \ref{a2}, which give the  characterization of the boundedness of $H$ under the assumption that $u\in A_1$,  namely,  for every $p>0$, 
$$
H: \Lambda^p_u(w) \to \Lambda^{p, \infty}_u(w)  \Longleftrightarrow  w\in B_{p,\infty}\cap B^*_{\infty}, 
$$
and, if $p>1$, 
$$
H: \Lambda^p_u(w) \to \Lambda^{p}_u(w)  \Longleftrightarrow  w\in B_{p}\cap B^*_{\infty} .
$$

Moreover, this characterization also works for the Hilbert maximal operator $H^*$ (see Remark \ref{maxim}). 

\smallskip

In order to avoid trivial cases, we shall assume that the weights $u$ and $w$ satisfy the following condition:

$$
W\left(\int_{-\infty}^{+\infty}u(x)\,dx\right)>0. 
$$

Finally, we  recall the definition of  the associated space of $\Lambda^p_u(w)$ (see~\cite{bs:bs}) since it will appear later on. 

 \begin{definition}
The  \textit{associate space}  of  $\Lambda^p_u(w)$  is defined as the set of all measurable functions $g$ such that 
$$
||g||_{(\Lambda^p_u(w))'}:= \sup_{f\in \Lambda^p_u(w)}\frac{\left|\displaystyle\int_{\mathbb R}f(x)g(x)u(x) dx\right| }{||f||_{\Lambda^p_u(w)}}<\infty.
$$
\end{definition}

In \cite{crs:crs},  these spaces were  given 
in terms of the Lorentz spaces $\Gamma$ defined as follows.

\begin{definition}
If $0<p<\infty$, 
$$
\Gamma_{u}^p(w)= \left\{f\in {\mathcal{M}}: ||f||_{\Gamma^p_u(w)}=\left( \int_0^{\infty}(f^{**}_{u}(t))^pw(t)dt \right)^{1/p}< \infty \right\}, 
$$
where $f^{**}_{u}(t)=P f_u^*(t)$, and 
$$
\Gamma_{u}^{p,\infty}(w)= \left\{f\in {\mathcal{M}}: ||f||_{\Gamma^{p,\infty}_u(w)}=\sup_{t>0} W^{1/p}(t) f^{**}_{u}(t)< \infty \right\}.
$$
\end{definition}

\begin{theorem} \label{associate} \cite{crs:crs}
The associate spaces of the Lorentz spaces are described as follows:

\noindent
(i) If $p\leq 1$, then
$$
(\Lambda^p_u(w))'= \Gamma^{1,\infty}_u(\tilde{w}),
$$
where $\widetilde{W}(t)=tW^{-1/p}(t)$, $t>0$.
\noindent
(ii) If $1<p<\infty$ and  $w\notin L^1(0, \infty)$, 
$$
(\Lambda^p_u(w))'= \Gamma^{p'}_u(v),
$$
where $v(t)=t^{p'}W^{-p'}(t)w(t)$, $t>0$.
\end{theorem}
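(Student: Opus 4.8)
The plan is to reduce this two-weight duality problem to a one-dimensional duality problem for decreasing functions, and then to read off the answer from the known description of the latter. The first step would be to exploit that the measure space $(\R,u\,dx)$ is non-atomic, hence resonant: by the Hardy--Littlewood inequality $\big|\int_\R fgu\,dx\big|\le\int_0^\infty f^*_u(t)g^*_u(t)\,dt$, and conversely, for a fixed $g$ and any nonnegative decreasing profile $h$ one can choose $f$ with $f^*_u=h$ making $\int_\R fgu\,dx$ as close as desired to $\int_0^\infty h(t)g^*_u(t)\,dt$. Since $\|f\|_{\Lambda^p_u(w)}=\|f^*_u\|_{L^p(w)}$ depends on $f$ only through $f^*_u$, which ranges over all nonnegative decreasing functions of finite $L^p(w)$ norm, this identifies
$$
\|g\|_{(\Lambda^p_u(w))'}=\sup\Big\{\int_0^\infty h(t)\,g^*_u(t)\,dt:\ 0\le h\downarrow,\ \|h\|_{L^p(w)}\le1\Big\}=:\|g^*_u\|_{(L^p_{{\rm dec}}(w))'}.
$$
It would then remain to evaluate this ``decreasing dual'' norm on $\phi:=g^*_u$ and to rewrite the outcome in terms of $\phi^{**}:=P\phi=g^{**}_u$, which is precisely what makes the spaces $\Gamma$ appear.

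For $p\le1$ I would argue directly. Writing a nonnegative decreasing $h$ as a superposition of the blocks $\chi_{(0,r)}$ and using subadditivity of $t\mapsto t^p$ (so that $\|h\|_{L^p(w)}^p\le\sum_j a_j^pW(r_j)$ for $h=\sum_j a_j\chi_{(0,r_j)}$, together with $\sum_j x_j^{1/p}\le(\sum_j x_j)^{1/p}$), one sees that testing against the normalized blocks $h_r=W(r)^{-1/p}\chi_{(0,r)}$ is already sharp, so that
$$
\|\phi\|_{(L^p_{{\rm dec}}(w))'}=\sup_{r>0}\frac1{W(r)^{1/p}}\int_0^r\phi(s)\,ds=\sup_{r>0}\widetilde W(r)\,\phi^{**}(r),
$$
with $\widetilde W(r)=r\,W(r)^{-1/p}$. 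Substituting back $\phi=g^*_u$ (hence $\phi^{**}=g^{**}_u$) yields exactly $\|g\|_{(\Lambda^p_u(w))'}=\|g\|_{\Gamma^{1,\infty}_u(\widetilde w)}$, which is part (i).

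For $p>1$ I would invoke Sawyer's duality principle for decreasing functions, in the form valid for an arbitrary weight $w$: for $\phi\ge0$ decreasing,
$$
\|\phi\|_{(L^p_{{\rm dec}}(w))'}\approx\left(\int_0^\infty\Big(\int_0^t\phi(s)\,ds\Big)^{p'}\frac{w(t)}{W(t)^{p'}}\,dt\right)^{1/p'}+\frac1{W(\infty)^{1/p}}\int_0^\infty\phi(s)\,ds.
$$
The hypothesis $w\notin L^1(0,\infty)$ forces $W(\infty)=\infty$, so the second term drops out; writing $\int_0^t\phi(s)\,ds=t\,\phi^{**}(t)$ turns the first term into $\big(\int_0^\infty\phi^{**}(t)^{p'}v(t)\,dt\big)^{1/p'}$ with $v(t)=t^{p'}W(t)^{-p'}w(t)$. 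Undoing the rearrangement substitution then gives $\|g\|_{(\Lambda^p_u(w))'}\approx\|g\|_{\Gamma^{p'}_u(v)}$, which is part (ii).

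The main obstacle is the case $p>1$: Sawyer's duality is the only genuinely non-elementary ingredient, since it rests on constructing, for each decreasing $\phi$, a near-optimal decreasing test function — the ``level function'' of $\phi$ relative to $w$ — and on the delicate verification that this construction works for all weights. A secondary point requiring care is the resonance argument in the first step: one must ensure that for fixed $g$ the profile $h$ is realized as some $f^*_u$ with the bilinear form almost attaining $\int_0^\infty h g^*_u$, and it is here that non-atomicity of $u\,dx$ and the normalization hypothesis $W(\int_\R u)>0$ come into play. Everything else — expressing the integrals through $\phi^{**}$ and identifying the weights $\widetilde w$ and $v$ — is routine bookkeeping.
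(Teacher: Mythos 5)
The paper does not prove this statement: it is quoted from \cite{crs:crs}, and your plan essentially reproduces the proof given there — reduce, via rearrangement invariance of the norm and resonance of the non-atomic measure space $(\R,u\,dx)$, to a duality statement for decreasing functions on $(0,\infty)$, then compute directly for $p\le 1$ and invoke Sawyer's duality principle for $p>1$, with the extra term $W(\infty)^{-1/p}\int_0^\infty\phi$ removed by the hypothesis $w\notin L^1(0,\infty)$. The reduction step and the whole $p>1$ half are sound as you describe them (and your remark that when $u\in L^1(\R)$ the admissible profiles $h=f^*_u$ live only on $(0,u(\R))$ is harmless: $g^*_u$ also vanishes beyond $u(\R)$, so truncating $h$ there leaves the pairing unchanged and does not increase $\|h\|_{L^p(w)}$).

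There is, however, a genuine flaw in your justification of the $p\le1$ case: the two elementary inequalities you cite point the wrong way. To show that testing on the normalized blocks is sharp you need, for a decreasing $h=\sum_j a_j\chi_{(0,r_j)}$ (layer-cake decomposition), the bound $\sum_j a_jW(r_j)^{1/p}\lesssim\|h\|_{L^p(w)}$, since then $\int h\,g^*_u\le\bigl(\sum_j a_jW(r_j)^{1/p}\bigr)\sup_r\int h_r\,g^*_u$. But subadditivity of $t\mapsto t^p$ gives $\|h\|_{L^p(w)}^p\le\sum_j a_j^pW(r_j)$, and $\sum_j x_j^{1/p}\le(\sum_j x_j)^{1/p}$ gives $\sum_j a_jW(r_j)^{1/p}\le(\sum_j a_j^pW(r_j))^{1/p}$: both estimates bound the two quantities you want to compare by the same middle term from the same side, so they cannot be combined into what you need. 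The needed estimate is nevertheless true (even with constant one): for $h$ decreasing and $p\le1$ one has $\int_0^t h^pw\ge h(t)^pW(t)$, hence
$$
\frac{d}{dt}\left(\int_0^t h^p(s)\,w(s)\,ds\right)^{1/p}\ \ge\ \frac1p\,h(t)\,W(t)^{1/p-1}w(t),
$$
and integrating in $t$ and then using Fubini together with $W(r)^{1/p}=\frac1p\int_0^rW(s)^{1/p-1}w(s)\,ds$ yields $\sum_j a_jW(r_j)^{1/p}\le\|h\|_{L^p(w)}$. With this replacement your argument for part (i) is complete; everything else, including the identification of $\widetilde W$ and $v$, is correct.
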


\noindent
{Acknowledgement:} We want to express our gratitude to the referee for all his/her comments and suggestions.

\section{Necessary conditions}

We start by considering some necessary conditions for the boundedness of the Hilbert transform.
 \begin{theorem}\label{NC}

Let $0<p<\infty$ and let us assume that the operator $H$ is well defined on $\Lambda^p_u(w)$ and that
$$
H:\Lambda^p_u(w) \longrightarrow\Lambda^{p, \infty}_u(w)
$$
is bounded. Then, the following conditions hold: 

\noindent
(a)   $u\not\in L^1(\R)$ and $w\not\in L^1(\R^+)$.

\noindent
(b) For every interval $I$ and every measurable set $E$   such that $E\subset I$, we have that
\begin{equation} \label{quasiconcrara}
\frac{W(u( I ))}{W(u( E ))}\lesssim \left(\frac{|I|}{|E|}\right)^p.
\end{equation}
In particular, $W\circ u$ satisfies the doubling property; that is,  $W(u(2I))\lesssim  W((u(I)))$,
for all intervals $I\subset\R$, where $2I$ denotes the interval with the
same center than $I$ and double size-length. 

\noindent
(c) $W$ is $p$ quasi-concave. 

\noindent
(d) For every  interval $I$, 
\begin{equation} \label{conocidodiagonal}
||u^{-1}\chi_{I}||_{(\Lambda^p_u(w))'}||\chi_{I}||_{\Lambda^p_u(w)} \lesssim |I|. 
\end{equation}
\end{theorem}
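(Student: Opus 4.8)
The plan is to feed the weak-type hypothesis a family of explicit test functions — characteristic functions of intervals and of their subsets — using two elementary pointwise facts about the Hilbert transform. I will use repeatedly that the weak-type boundedness is equivalent to the estimate
$$
\lambda\,W\big(u(\{x:|Hf(x)|>\lambda\})\big)^{1/p}\le C\,\|f\|_{\Lambda^p_u(w)},\qquad \lambda>0,
$$
(the standard relation between $\|\cdot\|_{\Lambda^{p,\infty}_u(w)}$, the rearrangement $f^*_u$, and the $u$-distribution function of $f$), and that $\|\chi_E\|_{\Lambda^p_u(w)}^p=W(u(E))$. The two facts are: \textbf{(F1)} for $I=(a,b)$ one has $H\chi_I(x)=\frac1\pi\log\frac{|x-a|}{|x-b|}$, so for every $\lambda>0$ the set $\{|H\chi_I|>\lambda\}$ contains the interval $(b,\,b+|I|/(e^{\pi\lambda}-1))$ adjacent to $I$ on the right (and a mirror interval near $a$); \textbf{(F2)} if $f\ge0$ is supported in an interval $I$ and $\widetilde I$ is the interval of length $|I|$ adjacent to $I$ (say, on the right), then $x-y$ has constant sign and $0<|x-y|\le 2|I|$ for $x\in\widetilde I,\ y\in I$, whence $|Hf(x)|\ge\frac1{2\pi|I|}\int_I f$ on $\widetilde I$; in particular $|H\chi_E|\ge\frac{|E|}{2\pi|I|}$ on $\widetilde I$ whenever $E\subset I$, and the same holds for the left-adjacent interval.

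For part (a): applying (F1) and the weak-type estimate to $I=(-M,b)$ gives, with $\delta>0$ fixed and $\lambda=\frac1\pi\log\big(1+(M+b)/\delta\big)$,
$$
\Big(\tfrac1\pi\log\big(1+(M+b)/\delta\big)\Big)^p\,W\big(u((b,b+\delta))\big)\lesssim W\big(u((-M,b))\big);
$$
if some left half-line $(-\infty,b)$ had finite $u$-measure the right side would stay bounded in $M$ while the left side blows up as $M\to\infty$ (here $W(u((b,b+\delta)))>0$), a contradiction; so every left half-line has infinite $u$-measure, and by the reflection symmetry of $H$ so does every right half-line, whence $u\notin L^1(\R)$. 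If moreover $W(\infty)<\infty$, then $\chi_{(0,\infty)}\in\Lambda^p_u(w)$ (its norm is $W(u((0,\infty)))^{1/p}=W(\infty)^{1/p}<\infty$) while the truncated integrals defining $H\chi_{(0,\infty)}$ diverge at every point, contradicting the assumption that $H$ is well defined on $\Lambda^p_u(w)$; hence $w\notin L^1(\R^+)$. For part (b): (F2) with $E=I$ gives $|H\chi_I|\ge\frac{\log 2}{\pi}$ on $\widetilde I$, so $W(u(\widetilde I))\lesssim W(u(I))$, and running the same computation with $I$ and $\widetilde I$ interchanged ($I$ being the left-adjacent interval of $\widetilde I$) gives the reverse bound, so $W(u(I))\approx W(u(\widetilde I))$ for adjacent equal-length intervals; then (F2) with a general $E\subset I$ gives $(\tfrac{|E|}{2\pi|I|})^pW(u(\widetilde I))\lesssim W(u(E))$, and substituting $W(u(\widetilde I))\approx W(u(I))$ yields \eqref{quasiconcrara}; the doubling of $W\circ u$ is \eqref{quasiconcrara} applied to $2I$ with $E=I$.

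For part (d): by the definition of the associate norm, \eqref{conocidodiagonal} is equivalent to $\big|\int_I f\big|\lesssim\frac{|I|}{W(u(I))^{1/p}}\,\|f\|_{\Lambda^p_u(w)}$; splitting $f$ into positive and negative parts and using that $\mathcal S_0(u)$ is dense (Proposition~\ref{properties}(c)) reduces this to $f\ge0$, for which (F2) together with the weak-type estimate gives exactly $\big(\tfrac1{2\pi|I|}\int_I f\big)^p W(u(\widetilde I))\lesssim\|f\|_{\Lambda^p_u(w)}^p$, finished again via $W(u(\widetilde I))\approx W(u(I))$. For part (c), fix $0<r<s$: since $u\notin L^1(\R)$ (part (a)) and $u\,dx$ is non-atomic, I choose an interval $I$ with $u(I)=s$ and a subset $E\subset I$ with $u(E)=r$ of the form $(\{u<\tau_0\}\cap I)$ together with a portion of $\{u=\tau_0\}\cap I$; then $u\le\tau_0$ on $E$ and $u\ge\tau_0$ on $I\setminus E$, which forces $|E|\ge r/\tau_0$ and $|E|\ge|I|-(s-r)/\tau_0$, hence $|E|\ge(r/s)|I|$ via the elementary inequality $\max\{r/\tau_0,\ |I|-(s-r)/\tau_0\}\ge(r/s)|I|$; inserting $|I|/|E|\le s/r$ into \eqref{quasiconcrara} gives $W(s)/W(r)\lesssim(s/r)^p$, i.e. $W$ is $p$ quasi-concave.

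I expect part (c) — specifically, the choice of the extremal level set $E$ — to be the main obstacle: the whole strength of \eqref{quasiconcrara} lies in being able to feed it nested sets whose \emph{Lebesgue}-measure ratio, not merely their $u$-measure ratio, is controlled, and only that level-set construction delivers $|I|/|E|\le s/r$. A secondary point that must be handled with care throughout is to apply the hypotheses only to functions on which $H$ acts classically (characteristic functions of bounded intervals and, more generally, elements of $\mathcal S_0(u)$), and then to reach the stated conclusions through the density and associate-norm arguments indicated above.
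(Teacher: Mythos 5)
Your proposal is correct, and for parts (b) and (d) it is essentially the paper's own argument: the lower bound $|Hf|\gtrsim \frac{1}{|I|}\int_I f$ on an adjacent interval $\widetilde I$ of the same length, tested on $\chi_E$ and on nonnegative $f$ supported in $I$, combined with the comparability $W(u(\widetilde I))\approx W(u(I))$ and the identification of $\|u^{-1}\chi_I\|_{(\Lambda^p_u(w))'}$ with $\sup_f |\int_I f|/\|f\|_{\Lambda^p_u(w)}$. The genuine differences are in (a) and (c). For (a), the paper works only with the weak-type inequality on $\chi_{(0,b)}$: from the exact distribution of $H\chi_{(0,b)}$ it derives the ratio estimate \eqref{log} of Lemma \ref{necSteinWeiss} and then concludes $W(u(\R))=\infty$ by a limit argument, which gives $u\notin L^1(\R)$ and $w\notin L^1(\R^+)$ simultaneously; you get $u\notin L^1$ by letting the level $\lambda$ blow up with the adjacent interval fixed (the same computation, organized differently), but you get $w\notin L^1$ qualitatively, by noting that $W(\infty)<\infty$ would place $\chi_{(0,\infty)}$ in $\Lambda^p_u(w)$ while its truncated Hilbert integrals diverge everywhere, i.e.\ you invoke the hypothesis that $H$ is well defined on all of $\Lambda^p_u(w)$. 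This is legitimate under the theorem's stated hypotheses, though it proves slightly less than Proposition \ref{no estan en L1}, where the restricted inequality \eqref{weak 1} alone forces non-integrability of $w$, and where the intermediate estimate \eqref{log} is reused later (in Theorem \ref{a1}) to obtain $w\in B^*_\infty$. For (c), the paper simply cites \cite[Lemma 3.3.1]{crs:crs}, whereas your level-set construction (choose $I$ with $u(I)=s$ by continuity of $b\mapsto u((0,b))$ together with $u\notin L^1$, take $E=(\{u<\tau_0\}\cap I)$ plus a portion of $\{u=\tau_0\}\cap I$ so that $u(E)=r$, and use $\max\{r/\tau_0,\,|I|-(s-r)/\tau_0\}\ge (r/s)|I|$) is a correct, self-contained proof of that implication; the elementary inequality checks out. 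Two small points to tighten: $W(u((b,b+\delta)))>0$ uses that the weight $u$ is positive a.e., and in (d) density of $\mathcal S_0(u)$ by itself does not transfer the estimate to all $f\in\Lambda^p_u(w)$ (continuity of $f\mapsto\int_I f$ is exactly what is being proved); a monotone truncation $f_n=\min(|f|,n)\chi_I$ together with monotone convergence finishes the associate-norm bound cleanly.
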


\begin{remark}

One could think that, in the case $p>1$,   the boundedness $H: \Lambda^p_u(w)\to \Lambda^p_u(w)$ holds if both the boundedness $H:L^p(u) \to L^p(u)$ (characterized by the condition $A_p$) and
the boundedness $H:\Lambda^p(w) \to \Lambda^p(w)$ (characterized by $w\in B_p \cap B^*_{\infty}$) hold. However, we prove with the following examples of $u$ and $w$  that, in general, the
conditions $u\in A_p$ and $w\in B_p \cap B^*_{\infty}$ are not sufficient for the boundedness of $H$ on $\Lambda^p_u(w)$, for $p>1$. 
\medskip

If the Hilbert transform is bounded on $\Lambda^p_u(w)$, with  $u(x)=|x|^{k}$ and $w(t)=t^l$ with $k,l > -1$,  then (\ref{quasiconcrara}) implies that  
\begin{equation}\label{contra}
(k+1)(l+1)\leq p.
\end{equation}
But, if we choose  $p$ and $k=l$ such that  $\sqrt{p}<k+1 <p$, then $u(x)=|x|^{k}\in A_p$ and $w(t)=t^{k} \in B_p\cap B^*_{\infty}$.
However, such $k=l>-1$ do  not satisfy  \eqref{contra},  since $p<(k+1)^2$. Hence, in this case, the Hilbert
transform is not bounded on $\Lambda^p_u(w)$.

\end{remark}

\

\centerline{\bf Proof of Theorem \ref{NC}}

\medskip

We will split the proof into several propositions. 
To prove {\it (a)} we  need the following lemmas:

\begin{lemma}
Let $a,b\in \R$ and let $\lambda>0$. Then 
\begin{equation}\label{fd de la H}
u\left(\left\{x:\, |H\chi_{(a,b)}(x)|>\lambda \right\}\right)= \int_{a-\psi(\la)}^{a+\varphi(\la)} u(s) \,ds
           +\int_{b-\varphi(\la)}^{b+\psi(\la)} u(s)\,ds,
\end{equation}
where
$$
\varphi(\la)=(b-a)\frac{1}{1+e^{\pi\lambda}}  \qquad{and}\qquad \psi(\la)=(b-a)\frac{1}{e^{\pi\lambda}-1}.
$$
\end{lemma}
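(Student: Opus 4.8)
The plan is to compute the distribution function of $H\chi_{(a,b)}$ explicitly, since the Hilbert transform of a characteristic function of an interval has a well-known closed form. First I would recall that, for $x\notin\{a,b\}$,
\[
H\chi_{(a,b)}(x)=\frac{1}{\pi}\log\left|\frac{x-a}{x-b}\right|,
\]
which follows by a direct computation of the principal value integral: $\int_{|x-y|>\varepsilon}\frac{\chi_{(a,b)}(y)}{x-y}\,dy=\log|x-a|-\log|x-b|$ once $\varepsilon$ is small enough, and the limit exists everywhere off the two endpoints. Note this function is negative for $x\in(a,b)$ (where $|x-a|<|x-b|$ near $a$... actually it ranges over all of $\R$), tends to $0$ as $x\to\pm\infty$, has a $+\infty$ singularity at $x=a$ and a $-\infty$ singularity at $x=b$.

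Next I would determine, for fixed $\lambda>0$, the set $\{x:\ |H\chi_{(a,b)}(x)|>\lambda\}$, i.e.\ $\{x:\ \bigl|\log|(x-a)/(x-b)|\bigr|>\pi\lambda\}$. This splits into $\log\bigl|\frac{x-a}{x-b}\bigr|>\pi\lambda$ and $\log\bigl|\frac{x-a}{x-b}\bigr|<-\pi\lambda$, equivalently $\bigl|\frac{x-a}{x-b}\bigr|>e^{\pi\lambda}$ and $\bigl|\frac{x-a}{x-b}\bigr|<e^{-\pi\lambda}$. Solving each of these inequalities for $x$ (carefully tracking the sign of $x-b$, and using symmetry $a\leftrightarrow b$) gives two intervals. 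A clean way: the first inequality says $x$ is much closer to $a$ than to $b$, producing an interval around $a$; after the algebra it comes out to $(a-\psi(\lambda),\,a+\varphi(\lambda))$ with the stated $\varphi,\psi$ (the asymmetry arising because $a$ is to the left of $b$). By the reflection $x\mapsto a+b-x$, which interchanges $a$ and $b$ and preserves $|H\chi_{(a,b)}|$, the second inequality yields the interval $(b-\varphi(\lambda),\,b+\psi(\lambda))$ around $b$. Then integrating $u$ over the (disjoint, for $\lambda>0$) union of these two intervals gives exactly the right-hand side of \eqref{fd de la H}.

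The only genuinely delicate point is the bookkeeping in solving the absolute-value inequalities: one must check that for every $\lambda>0$ the two intervals are nonempty, bounded, disjoint, and that no extra solution set is missed (in particular verifying that $x=a,x=b$ are handled and that the endpoints of $(a,b)$ land in the correct interval). Writing $b-a=\ell>0$ and clearing denominators reduces each inequality to a linear one once the sign of the denominator is fixed, so this is routine but must be done on each of the relevant sign-regions of $x$. I would organize it as: (i) establish the $\log$ formula; (ii) by translating set $a=0$ and scaling set $b-a=1$, so it suffices to treat $\chi_{(0,1)}$; (iii) solve $\frac{x}{x-1}>e^{\pi\lambda}$ and $\frac{x}{1-x}<\cdots$ etc.\ on the relevant sign regions to read off the two intervals; (iv) undo the normalization and integrate against $u$. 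The expected main obstacle is purely the case analysis in step (iii), not any conceptual difficulty.
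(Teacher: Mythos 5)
Your proposal is exactly the paper's argument: the authors cite the closed form $H\chi_{(a,b)}(x)=\frac{1}{\pi}\log\frac{|x-a|}{|x-b|}$ and dismiss the rest as ``easy computations,'' which are precisely the level-set computations you spell out. One small bookkeeping remark: the inequality $\bigl|\frac{x-a}{x-b}\bigr|>e^{\pi\lambda}$ means $x$ is closer to $b$ (not $a$), so it yields the interval $(b-\varphi(\la),b+\psi(\la))$ and the reversed inequality yields $(a-\psi(\la),a+\varphi(\la))$ — your labels are swapped, but the union, and hence the identity, is unaffected.
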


\begin{proof}
Since it is known (see \cite{g:g}) that
$$
 \displaystyle{H\chi_{(a,b)}(x)=\frac{1}{\pi}\log\frac{|x-a|}{|x-b|}},
$$
the proof follows by easy computations. 
\end{proof}

\begin{lemma} \label{necSteinWeiss}
Let $0<p<\infty$. If the Hilbert transform satisfies that
\begin{equation} \label{weak 1}
\|H\chi_{(0,b)}\|_{\Lambda^{p, \infty}_u(w)} \lesssim \|\chi_{(0,b)}\|_{\Lambda^{p}_u(w)},
\end{equation}
for all $b>0$, then for every $\nu \in (0,1]$, 
\begin{equation}\label{log}
\sup_{b>0}\frac{W\left(\int_{-b\nu}^{b\nu} u(s)\,ds\right)}{W\left(\int_{-b}^b u(s)\,ds\right)}\lesssim
               \left(1+ \log\frac{1}{\nu}\right)^{-p}.
\end{equation}

\end{lemma}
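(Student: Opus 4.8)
The plan is to reduce the stated estimate to a pointwise‑in‑$\lambda$ level‑set inequality extracted from the hypothesis \eqref{weak 1}, and then to choose $\lambda$ as a function of $\nu$ so that the resulting level set of $H\chi_{(0,b)}$ contains the symmetric interval $(-b\nu,b\nu)$. First I would record two elementary facts: since the decreasing rearrangement of $\chi_{(0,b)}$ with respect to $u$ is $\chi_{(0,u((0,b)))}$, one has $\|\chi_{(0,b)}\|_{\Lambda^{p}_u(w)}^p=W(u((0,b)))$; and, for every measurable $g$ and every $\lambda>0$, the bound $g^*_u(t)\ge\lambda$ for $t<u(\{|g|>\lambda\})$ together with the fact that $W$ is nondecreasing and left‑continuous gives $\|g\|_{\Lambda^{p,\infty}_u(w)}\ge\lambda\,W^{1/p}\bigl(u(\{|g|>\lambda\})\bigr)$. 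Applying this to $g=H\chi_{(0,b)}$, combining with \eqref{weak 1}, and inserting the distribution function formula \eqref{fd de la H} with $a=0$, I obtain that, for every $b>0$ and every $\lambda>0$,
\begin{equation*}
\lambda^p\,W\!\left(\int_{-\psi(\lambda)}^{\varphi(\lambda)}u(s)\,ds+\int_{b-\varphi(\lambda)}^{b+\psi(\lambda)}u(s)\,ds\right)\lesssim W(u((0,b))).
\end{equation*}

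Next I would specialize $\lambda$. For $\nu\in(1/4,1]$ the claimed inequality is trivial, since then $1+\log(1/\nu)\approx 1$ while $W\bigl(\int_{-b\nu}^{b\nu}u\bigr)\le W\bigl(\int_{-b}^{b}u\bigr)$ by monotonicity of $W$. For $\nu\in(0,1/4]$, set $\lambda_\nu=\tfrac1\pi\log\tfrac{1-\nu}{\nu}>0$; a direct computation gives $\varphi(\lambda_\nu)=b\nu$ and $\psi(\lambda_\nu)=\tfrac{b\nu}{1-2\nu}\ge b\nu$, so $(-b\nu,b\nu)\subset(-\psi(\lambda_\nu),\varphi(\lambda_\nu))$ and hence the argument of $W$ in the displayed inequality is at least $\int_{-b\nu}^{b\nu}u(s)\,ds$. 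Using also $W(u((0,b)))\le W(u((-b,b)))$, this yields $\lambda_\nu^p\,W\bigl(\int_{-b\nu}^{b\nu}u\bigr)\lesssim W(u((-b,b)))$.

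Finally, for $\nu\le 1/4$ one has $\log(1/\nu)\ge\log 4>1$ and $\lambda_\nu=\tfrac1\pi\bigl(\log(1/\nu)+\log(1-\nu)\bigr)\gtrsim\log(1/\nu)\approx 1+\log(1/\nu)$, so $\lambda_\nu^{-p}\lesssim(1+\log(1/\nu))^{-p}$; dividing by $W(u((-b,b)))$ and taking the supremum over $b>0$ gives \eqref{log}. No step is genuinely hard; the only point requiring care is the bookkeeping behind the choice $\lambda_\nu=\tfrac1\pi\log\tfrac{1-\nu}{\nu}$, which is precisely what forces $\varphi(\lambda_\nu)=b\nu$ while keeping $\psi(\lambda_\nu)\ge b\nu$, together with the separate (trivial) treatment of $\nu$ near $1$, needed because $\lambda_\nu$ ceases to be comparable to $1+\log(1/\nu)$ as $\nu\to1^-$.
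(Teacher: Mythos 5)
Your proof is correct and follows essentially the same route as the paper: extract the level-set inequality from \eqref{weak 1}, insert the distribution formula \eqref{fd de la H}, and choose $\lambda$ so that $\nu=\frac{1}{1+e^{\pi\lambda}}$ (equivalently $\lambda_\nu=\frac1\pi\log\frac{1-\nu}{\nu}$), using that $\psi(\lambda)\ge\varphi(\lambda)$ to capture the symmetric interval $(-b\nu,b\nu)$, with the trivial bound handling $\nu$ near $1$. The only cosmetic difference is your cutoff at $\nu\le 1/4$ instead of the paper's $\nu<1/2$, which changes nothing.
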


\begin{proof}
Let $b>0$.  By hypothesis we have that
$$
\sup_{\lambda>0} W\big(\,u(\{x:\, |H\chi_{(0,b)}(x)|>\lambda\})\,\big)\lambda^p\lesssim W\left(\int_0^bu(s)\,ds\right),
$$
which, applying \eqref{fd de la H}, is equivalent to

$$
\sup_{\lambda>0} W\left( \int_{-\psi(\la)}^{\varphi(\la)} u(s)\,+\int_{b-\varphi(\la)}^{b+\psi(\la)} u(s)\,ds\right)
      \lambda^p\lesssim W\left(\int_0^bu(s)\,ds\right).
$$
Then, we necessarily obtain that, for
every $\lambda>0$,
$$
	  W\left(\int_{\frac{b}{1-e^{\pi\lambda}}}^{\frac{b}{1+e^{\pi\lambda}}} u(s)\,ds\right)\lambda^p\lesssim
	W\left(\int_0^b u(s)\,ds\right).
$$
Since $\displaystyle{\frac{b}{1-e^{\pi\lambda}}< \frac{-b}{1+e^{\pi \la}}<0<\frac{b}{1+e^{\pi\lambda}}}$,  we obtain that 

$$
	  W\left(\int_{\frac{-b}{1+e^{\pi\lambda}}}^{\frac{b}{1+e^{\pi\lambda}}} u(s)\,ds\right)\lambda^p\lesssim
	W\left(\int_0^b u(s)\,ds\right).
$$
Writing  $\nu=\frac{1}{1+e^{\pi\la}}$ we get

$$
\sup_{b>0}\frac{W\left(\int_{-b\nu}^{b\nu} u(s)\,ds\right)}{W\left(\int_{-b}^b u(s)\,ds\right)}\lesssim
\left(\log \frac{1-\nu}{\nu}\right)^{-p}\approx  \left(1+ \log\frac{1}{\nu}\right)^{-p}, 
$$
for every $\nu \in(0,1/2)$.
On the other hand since,  for every $\nu \in(0,1]$, 

$$
\sup_{b>0}\frac{W\left(\int_{-b\nu}^{b\nu} u(s)\,ds\right)}{W\left(\int_{-b}^b u(s)\,ds\right)}\leq 1, 
$$
we obtain that (\ref{log}) holds. 
\end{proof}

\begin{remark}
It is known (see \cite{crs:crs})  that if the  Hardy-Littlewood maximal operator  satisfies that $M:\Lambda^p_u(w) \to \Lambda^{p, \infty}_u(w)$ is bounded, then  $u$ is necessarily non-integrable, whereas there are no integrability restrictions on $w$.   However, we shall prove that
if the Hilbert transform satisfies (\ref{necSteinWeiss}) then both $u$ and
$w$ are non-integrable.

If $u=1$, we recover a well-known result proved by Sawyer in \cite{s:s}, which states  that
the boundedness of the Hilbert transform on classical Lorentz spaces $H:\Lambda^p (w) \to \Lambda^{p} (w)$ implies the non-integrability of $w$.
\end{remark}

As a consequence of the following proposition we obtain the proof of Theorem \ref{NC} {\it (a)}.

\begin{proposition}\label{no estan en L1}
If the Hilbert transform satisfies \eqref{weak 1}, 
then $u\not\in L^1(\R)$ and $w\not\in L^1(\R^+)$.
\end{proposition}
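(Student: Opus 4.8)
\textbf{Proof proposal for Proposition~\ref{no estan en L1}.}
The plan is to argue by contradiction, using only the logarithmic decay estimate \eqref{log} supplied by Lemma~\ref{necSteinWeiss} (note that \eqref{weak 1} is exactly its hypothesis, so \eqref{log} is available). Set
$$
g(b)=\int_{-b}^{b}u(s)\,ds,\qquad b>0,
$$
which is nondecreasing in $b$, finite (since $u$ is locally integrable), and strictly positive (since $u$ is a weight); let $L=\lim_{b\to\infty}g(b)\in(0,\infty]$. The key observation is that for every fixed $\nu\in(0,1]$ we have $b\nu\to\infty$, hence $g(b\nu)\to L$ and $g(b)\to L$ as $b\to\infty$; so whenever the common limit of the numerator and the denominator in \eqref{log} is finite and nonzero, the ratio there tends to $1$, which will clash with the right-hand side of \eqref{log} once $\nu$ is small enough, because $(1+\log\frac1\nu)^{-p}\to 0$ as $\nu\to 0^{+}$.

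First I would rule out $u\in L^{1}(\R)$. In that case $L=\|u\|_{L^{1}(\R)}<\infty$, and since $W(r)=\int_{0}^{r}w$ is continuous on $[0,\infty)$ and strictly positive for $r>0$ (as $w$ is a weight), we get $0<W(L)<\infty$ and therefore
$$
\lim_{b\to\infty}\frac{W\big(g(b\nu)\big)}{W\big(g(b)\big)}=\frac{W(L)}{W(L)}=1
$$
for each $\nu\in(0,1]$. Since the supremum over $b>0$ dominates this limit, \eqref{log} gives $1\le \sup_{b>0}\frac{W(g(b\nu))}{W(g(b))}\lesssim\big(1+\log\frac1\nu\big)^{-p}$; letting $\nu\to 0^{+}$ forces $1\le 0$, a contradiction. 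Hence $u\notin L^{1}(\R)$, i.e. $L=\infty$.

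Next, knowing $L=\infty$, I would rule out $w\in L^{1}(\R^{+})$. If $w\in L^{1}(\R^{+})$, then $W(\infty):=\lim_{r\to\infty}W(r)=\|w\|_{L^{1}(\R^{+})}\in(0,\infty)$, and since now $g(b\nu)\to\infty$ and $g(b)\to\infty$, monotone convergence yields $W(g(b\nu))\to W(\infty)$ and $W(g(b))\to W(\infty)$; exactly as before the ratio in \eqref{log} tends to $1$ for every $\nu\in(0,1]$, and letting $\nu\to 0^{+}$ contradicts \eqref{log}. Therefore $w\notin L^{1}(\R^{+})$. I do not expect a genuine obstacle here: the argument is a soft limiting argument once Lemma~\ref{necSteinWeiss} is in place, and the only points needing a little care are the continuity of $W$ on $[0,\infty)$ (so that $W(g(b))\to W(L)$ when $L<\infty$), the positivity and finiteness of the relevant limit of $W$ (both guaranteed by $u$ and $w$ being genuine weights), and the elementary fact that $\sup_{b>0}$ dominates the limit as $b\to\infty$.
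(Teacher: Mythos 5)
Your argument is correct and is essentially the paper's own proof: both rely on Lemma~\ref{necSteinWeiss}, fix (or shrink) $\nu$ so that the right-hand side of \eqref{log} is small, and contradict the fact that the ratio tends to $1$ as $b\to\infty$ when the relevant limit of $W$ is finite and positive. The only difference is organizational: the paper treats both conclusions at once by showing $W\left(\int_{-\infty}^{+\infty}u\right)=\infty$ (which simultaneously excludes $u\in L^1(\R)$ and $w\in L^1(\R^+)$), whereas you run the same contradiction twice, first for $u$ and then for $w$.
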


\begin{proof}
Since $w$ is locally integrable, it is enough to prove that
\begin{equation}\label{objetivo}
W\left(\int_{-\infty}^{+\infty}u(x)\,dx\right)=\lim_{t\to\infty}W\left(\int_{-t}^{\,t}u(x)\,dx\right)=\infty.
\end{equation}
Suppose that this limit is a finite number $\ell>0$.
Since, by Lemma~\ref{necSteinWeiss} we  have that, there exists $C>0$ such that, for all $\nu \in (0,1]$,
$$
\sup_{b>0}\frac{W\left(\int_{-b\nu}^{b\nu} u(s)\,ds\right)}{W\left(\int_{-b}^b u(s)\,ds\right)}\leq C
    \left(\log\frac{1}{\nu}\right)^{-p}, 
$$
 taking $\nu>0$ small enough satisfying  $C \left(\log \frac{1}{\nu}\right)^{-p}<1/2$, we obtain that 
$$
\lim_{b\to\infty}\frac{W\left(\int_{-\nu b}^{\nu b} u(s)\,ds\right)}{W\left(\int_{-b}^{b} u(s)\,ds\right)}\leq
   \sup_{b>0}\frac{W\left(\int_{-\nu b}^{\nu b} u(s)\,ds\right)}{W\left(\int_{-b}^{b} u(s)\,ds\right)}\leq \frac{1}{2}. 
$$
Since we also have that 

$$
\lim_{b\to\infty}\frac{W\left(\int_{-\nu b}^{\nu b} u(s)\,ds\right)}{W\left(\int_{-b}^{b} u(s)\,ds \right)}
=\frac{\ell}{\ell}=1, 
$$
we get a contradiction, and therefore,   \eqref{objetivo} holds.
\end{proof}

The following proposition gives us as a consequence the proof of Theorem \ref{NC} {\it (b)}.

\begin{proposition}\label{necesaria geometrica}
Let $0 < p <\infty$ and assume that, for every measurable set $F$,
$$
||H\chi_F||_{\Lambda^{p, \infty}_u(w)}\lesssim ||\chi_F||_{\Lambda^{p}_u(w)}.
$$
Then, \eqref{quasiconcrara}  holds.

\end{proposition}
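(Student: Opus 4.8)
The plan is to fix an interval $I=(a,b)$, set $\ell:=|I|=b-a$, fix a measurable set $E\subset I$ (we may assume $|E|>0$, otherwise \eqref{quasiconcrara} is trivial), and obtain the estimate from \emph{two} applications of the hypothesis, the second one using as test set the adjacent interval $I_-:=(a-\ell,a)$. The reason for this detour is that $\chi_E$ by itself is not enough: $H\chi_E$ need not be large on all of $I$ --- indeed $H\chi_I$ vanishes at the midpoint of $I$ --- so one cannot bound $W(u(I))$ from below directly by $\|H\chi_E\|_{\Lambda^{p,\infty}_u(w)}$.

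First I would record two elementary pointwise estimates, in each of which $H$ is represented by an absolutely convergent integral because the point in question stays at a positive distance from the set. If $x\in I_-$ and $y\in E\subset(a,b)$ then $0<y-x<2\ell$, so
$$
|H\chi_E(x)|=\frac1\pi\int_E\frac{dy}{y-x}\ \ge\ \frac{|E|}{2\pi\ell},\qquad x\in I_- .
$$
On the other hand, since $I_-$ lies entirely to the left of $I$, for $x\in I$ and $y\in I_-$ one has $0<x-y<2\ell$, and hence
$$
H\chi_{I_-}(x)\ \ge\ \frac{|I_-|}{2\pi\ell}=\frac{1}{2\pi},\qquad x\in I ;
$$
note that this last lower bound is an absolute constant, with no dependence on $I$, precisely because $I_-$ sits on one side of $I$ and no cancellation occurs.

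Next I would feed each of these into the standard weak-type inequality $\|g\|_{\Lambda^{p,\infty}_u(w)}\ge W^{1/p}\big(u(\{|g|>\lambda\})\big)\,\lambda$, valid for every $\lambda>0$ (this follows from the definitions of the quasinorm and of the rearrangement, using the continuity of $W$, which holds since $w$ is locally integrable), together with the identity $\|\chi_F\|_{\Lambda^p_u(w)}=W^{1/p}(u(F))$. Applying the hypothesis with $F=E$ and using the first pointwise bound --- so that $\{|H\chi_E|>|E|/2\pi\ell\}\supset I_-$ --- yields
$$
W(u(I_-))\ \lesssim\ \Big(\frac{|I|}{|E|}\Big)^{p}W(u(E)),
$$
and applying it with $F=I_-$ and using the second pointwise bound --- so that $\{H\chi_{I_-}>1/2\pi\}\supset I$ --- yields
$$
W(u(I))\ \lesssim\ W(u(I_-)) .
$$
Chaining the two displayed inequalities gives $W(u(I))\lesssim(|I|/|E|)^{p}W(u(E))$, which is \eqref{quasiconcrara}; the doubling of $W\circ u$ asserted in Theorem \ref{NC}(b) then follows immediately by taking $E=I$ inside $2I$.

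I expect the only points that need genuine care, rather than being routine, to be the justification that on $I_-$ (respectively on $I$) the transform $H\chi_E$ (respectively $H\chi_{I_-}$) coincides with the ordinary integral --- which is where positivity of the distance to the set enters --- and the derivation of the weak-type inequality above, where continuity of $W$ is used. The conceptual crux, and the step I expect a reader to find least obvious, is the use of the external interval $I_-$ as a second test function: this is what transfers the estimate from $u(I_-)$ to $u(I)$, and so circumvents the absence of a lower bound for $H\chi_E$ on the bulk of $I$.
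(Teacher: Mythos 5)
Your proof is correct and follows essentially the same route as the paper: you apply the hypothesis to $\chi_E$ and use the lower bound $|H\chi_E|\gtrsim |E|/|I|$ on an adjacent interval of the same length, and then transfer from that adjacent interval back to $I$ by a second application of the hypothesis with the adjacent interval as test set (the paper phrases this as ``interchanging the roles of $I$ and $I'$''). The only differences are cosmetic (explicit constants with $\pi$, and spelling out the weak-type inequality and the continuity of $W$).
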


\begin{proof}
Let $f$ be a non-negative function supported in $I$. Let  $I'$ be an interval of the same size touching $I$.
If $x\in I'$ we  have that 

$$
|Hf(x)|=\left|\int_\R \frac{f(y)}{x-y}\,dy\right| = \left|\int_{I} \frac{f(y)}{x-y}\,dy\right| \geq\frac{1}{2|I|}
   \int_{I} f(y)\,dy.
$$

If  $f=\chi_{E}$ with $E\subset I$,   and $x\in I'$, then $\frac{|E|}{2|I|}\leq |Hf(x)|$ and hence  if $\la\leq \frac{|E|}{2|I|}$, we have that $I'\subseteq \{x:\, |Hf(x)|>\la\}.$
Therefore
\begin{align*}
W(u( I'))&\leq W(u(\{x:\, |Hf(x)|>\la\}))
\lesssim \frac{1}{\la^p}\int_0^\infty (\chi_{E})^*_u(t)w(t)\,dt\\
&\approx\frac{1}{\la^p}W(u(E)).
\end{align*}
 As the above inequality holds for every $\la\leq \frac{|E|}{2|I|}$, we obtain that 

$$
\frac{W(u(I'))}{W(u(E))}\lesssim  \left(\frac{|I|}{|E|}\right)^p.
$$
So, it only remains to prove that we can replace $I'$ by the interval $I$. In fact,  the quantities $W(u( I' ))$ and $W(u( I ))$ are comparable, since taking  $E=I$ we get 
$
W(u( I')) \lesssim \  W(u ( I )),
$
and  interchanging the roles of $I$ and $I'$ we get the converse inequality 
$
W(u( I )) \lesssim \  W (u ( I')). 
$
\end{proof}

The proof of Theorem \ref{NC} {\it (c)} is a consequence of (\ref{quasiconcrara})  and it was proved in \cite[Lemma 3.3.1]{crs:crs}. 

\medskip

\begin{proposition} \label{dualidad}
Let $0 < p <\infty$ and let us assume that  $H:\Lambda^p_u(w) \to \Lambda^{p, \infty}_u(w)$ is bounded. Then  \eqref{conocidodiagonal} holds.

\end{proposition}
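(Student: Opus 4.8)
The plan is to exploit the classical local estimate for the Hilbert transform: on an interval $I$, and for an interval $I'$ of the same length adjacent to $I$, one has $|Hf(x)| \gtrsim |I|^{-1}\int_I |f(y)|\,dy$ for $x \in I'$, exactly as used in the proof of Proposition \ref{necesaria geometrica}. I would start by fixing an interval $I$ and taking an arbitrary nonnegative $f \in \Lambda^p_u(w)$ supported on $I$. Applying the weak-type boundedness hypothesis to such an $f$, and using that $I' \subseteq \{x : |Hf(x)| > \lambda\}$ whenever $\lambda \le \tfrac{1}{2|I|}\int_I f(y)\,dy$, I get
$$
W^{1/p}\big(u(I')\big)\,\frac{1}{2|I|}\int_I f(y)\,dy \;\le\; \|Hf\|_{\Lambda^{p,\infty}_u(w)} \;\lesssim\; \|f\|_{\Lambda^p_u(w)}.
$$
Since $W(u(I')) \approx W(u(I))$ (again by the argument at the end of the proof of Proposition \ref{necesaria geometrica}, which only needs the geometric estimate \eqref{quasiconcrara}), this rearranges to
$$
\int_I f(y)\,dy \;\lesssim\; \frac{|I|}{W^{1/p}(u(I))}\,\|f\|_{\Lambda^p_u(w)}.
$$

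Next I would rewrite the left-hand side as a pairing against $u$: $\int_I f(y)\,dy = \int_{\R} f(y)\,\big(u^{-1}(y)\chi_I(y)\big)\,u(y)\,dy$. Taking the supremum over all $f$ in the unit ball of $\Lambda^p_u(w)$ (a priori over nonnegative $f$ supported in $I$, but the general case reduces to this by replacing $f$ with $|f|\chi_I$, which only decreases the norm and does not decrease the absolute value of the pairing), the definition of the associate norm gives
$$
\|u^{-1}\chi_I\|_{(\Lambda^p_u(w))'} \;\lesssim\; \frac{|I|}{W^{1/p}(u(I))}.
$$
Finally, since $\chi_I^*\!{}_u = \chi_{(0,u(I))}$, a direct computation yields $\|\chi_I\|_{\Lambda^p_u(w)} = W^{1/p}(u(I))$. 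Multiplying the last two displays gives exactly \eqref{conocidodiagonal}.

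I do not expect any serious obstacle here; the only points that require a little care are the density/positivity reduction when passing to the supremum defining the associate norm (one must check that restricting to nonnegative functions supported in $I$ does not shrink the supremum — it does not, since $\big|\int f\,u^{-1}\chi_I u\big| \le \int |f|\chi_I = \int (|f|\chi_I)\,u^{-1}\chi_I\,u$ and $\||f|\chi_I\|_{\Lambda^p_u(w)} \le \|f\|_{\Lambda^p_u(w)}$), and invoking the comparability $W(u(I)) \approx W(u(I'))$, which is already available from part \textit{(b)} of Theorem \ref{NC}. Everything else is the same local lower bound for $H$ used earlier in the section, so the proof is short.
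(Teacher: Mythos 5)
Your proposal is correct and follows essentially the same route as the paper: the local lower bound $|Hf(x)|\gtrsim |I|^{-1}\int_I f$ on an adjacent interval $I'$, the weak-type hypothesis applied at the level $\lambda\le f_I/(2|I|)$, the identification $f_I=\int f\,(u^{-1}\chi_I)\,u$ to pass to the associate norm, and the comparability $W(u(I))\approx W(u(I'))$ to replace $I'$ by $I$. The only (harmless) difference is that you spell out the reduction to nonnegative $f$ supported in $I$ in the supremum defining the associate norm, which the paper leaves implicit.
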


\begin{proof}
Let $I$ and $I'$ be as in Proposition \ref{necesaria geometrica}. Then we have already seen that  if   $f$ is supported in $I$, $f_I=\int_I f(x)dx$ and $\lambda\leq \frac{f_I}{2|I|}$, we have that 
$I'\subseteq \{x:\, |Hf(x)|>\lambda\}$.  Therefore
\begin{align*}
W^{1/p}(u( I'))&\leq W^{1/p}(u(\{x: |Hf(x)|> \lambda\}))\leq C \frac{1}{\lambda} ||f||_{\Lambda^p_u(w) },
\end{align*}
and since this holds for every 
$\lambda\leq \frac{f_I}{2|I|}$, we obtain
$$
\left(\frac{f_I}{||f||_{\Lambda^p_u(w) }} \right){W^{1/p}(u(I'))} \lesssim |I|.
$$

Considering the supremum over all $f\in \Lambda^p_u(w)$ and taking into account that
$$
f_I= \int f(x) (u^{-1}(x)\chi_I(x)) u(x)dx, 
$$
we get that
$$
||u^{-1}\chi_I||_{(\Lambda^p_u(w) )'} {W^{1/p}(u(I'))} \lesssim |I|. 
$$
Since
$$
||\chi_I||^{p}_{\Lambda^p_u(w) }= W(u(I))\leq W(u(3I'))\leq c W(u(I')), 
$$
we obtain the result. 
\end{proof}

And this concludes the proof of our main Theorem \ref{NC}. Let us now analyze in more detail  condition  (\ref{conocidodiagonal}).

\begin{proposition}

\ 

\noindent
i)  If $p\leq 1$,  condition (\ref{conocidodiagonal}) is equivalent to condition (\ref{quasiconcrara}).

\noindent
ii) If $p>1$,  condition \eqref{conocidodiagonal} is equivalent to the following: for every interval $I$, 
\begin{equation} \label{GenHardyNec}
\left(\int_0^{u(I)} \left( \frac{\phi_I(t)}{W(t)} \right)^{p'} w(t)dt\right)^{1/p'} \lesssim \frac{|I|}{W^{1/p}(u(I))},
\end{equation}
where 
\begin{equation}\label{fi}
\phi_{I}(t)=\sup\{|E|: E\subset I, u(E)=t\}.
\end{equation}

\end{proposition}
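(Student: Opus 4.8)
The plan is to analyze condition \eqref{conocidodiagonal} by computing the two norms appearing in it as explicitly as possible. The norm $\|\chi_I\|_{\Lambda^p_u(w)} = W^{1/p}(u(I))$ is immediate from the definition, since $(\chi_I)^*_u = \chi_{(0,u(I))}$. The work is therefore entirely in identifying $\|u^{-1}\chi_I\|_{(\Lambda^p_u(w))'}$, and here I would invoke Theorem \ref{associate}, splitting into the two cases $p\le 1$ and $p>1$ exactly as the statement does.

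For part (i), $p\le 1$: by Theorem \ref{associate}(i), $(\Lambda^p_u(w))' = \Gamma^{1,\infty}_u(\widetilde w)$ with $\widetilde W(t) = tW^{-1/p}(t)$, so $\|u^{-1}\chi_I\|_{(\Lambda^p_u(w))'} = \sup_{t>0}\widetilde W(t)\,(u^{-1}\chi_I)^{**}_u(t)$. Since $u^{-1}\chi_I$ has $u$-measure $u(I)$ for its support, one computes $(u^{-1}\chi_I)^*_u$ and then its average $(u^{-1}\chi_I)^{**}_u(t) = \frac1t\int_0^t (u^{-1}\chi_I)^*_u$. The key observation is that $\int_0^t (u^{-1}\chi_I)^*_u(s)\,ds$, for $t\le u(I)$, equals $\sup\{|E|: E\subset I,\ u(E)=t\} = \phi_I(t)$ (this is essentially the Hardy–Littlewood rearrangement identity adapted to the measure $u\,dx$: integrating the largest values of $u^{-1}$ over the support corresponds to picking the subset of $I$ of $u$-mass $t$ with largest Lebesgue measure); and for $t\ge u(I)$ it equals $|I|$. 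Hence $\|u^{-1}\chi_I\|_{(\Lambda^p_u(w))'} = \sup_{t>0} W^{-1/p}(t)\min(\phi_I(t),|I|)$. Feeding this into \eqref{conocidodiagonal} and using that $W$ is increasing, the supremum is governed by its value near $t=u(I)$ where $\min(\phi_I(t),|I|)=|I|$, giving $W^{-1/p}(u(I))|I| \lesssim |I|/W^{1/p}(u(I))\cdot$(something); one checks the whole supremum reduces, after using $p$-quasiconcavity of $W$ (Theorem \ref{NC}(c)) and the elementary bound $\phi_I(t)/t \le \phi_I(u(I))/\cdots$, to precisely \eqref{quasiconcrara}. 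The converse direction, that \eqref{quasiconcrara} forces the supremum to be $\lesssim |I|/W^{1/p}(u(I))$, is a direct substitution: \eqref{quasiconcrara} applied to a subset $E\subset I$ with $u(E)=t$ and $|E|$ near $\phi_I(t)$ gives $W(u(I))/W(t)\lesssim (|I|/\phi_I(t))^p$, i.e. $W^{-1/p}(t)\phi_I(t)\lesssim |I| W^{-1/p}(u(I))$, uniformly in $t\le u(I)$.

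For part (ii), $p>1$: by Theorem \ref{associate}(ii) (using $w\notin L^1$, which holds by Theorem \ref{NC}(a) once $H$ is assumed bounded — and in any case $p$-quasiconcavity together with the standing hypothesis handles degenerate situations), $(\Lambda^p_u(w))' = \Gamma^{p'}_u(v)$ with $v(t) = t^{p'}W^{-p'}(t)w(t)$. So $\|u^{-1}\chi_I\|_{(\Lambda^p_u(w))'} = \left(\int_0^\infty \big((u^{-1}\chi_I)^{**}_u(t)\big)^{p'} v(t)\,dt\right)^{1/p'}$, and using again $(u^{-1}\chi_I)^{**}_u(t) = \frac1t\min(\phi_I(t),|I|)$ we get, after plugging in $v$ and cancelling the factor $t^{p'}$,
\begin{equation*}
\|u^{-1}\chi_I\|_{(\Lambda^p_u(w))'} = \left(\int_0^\infty \left(\frac{\min(\phi_I(t),|I|)}{W(t)}\right)^{p'} w(t)\,dt\right)^{1/p'}.
\end{equation*}
The tail $t\ge u(I)$ contributes $|I|^{p'}\int_{u(I)}^\infty W^{-p'}(t)w(t)\,dt \approx |I|^{p'} W^{1-p'}(u(I))$ (since $w\notin L^1$, the integral $\int_{u(I)}^\infty W^{-p'}w = \frac{W^{1-p'}(u(I))}{p'-1}$ exactly), which is $\approx \big(|I|/W^{1/p}(u(I))\big)^{p'}$ — exactly the right-hand side of \eqref{GenHardyNec}; so this part of the norm is always comparable to the target and can be absorbed. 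The remaining part is $\int_0^{u(I)}(\phi_I(t)/W(t))^{p'}w(t)\,dt$, so \eqref{conocidodiagonal} becomes equivalent to \eqref{GenHardyNec} after dividing by $W^{1/p}(u(I))^{p'}$ and simplifying. The main obstacle I anticipate is the rearrangement identity $\int_0^t(u^{-1}\chi_I)^*_u(s)\,ds = \min(\phi_I(t),|I|)$ and making precise that $\phi_I$ is the correct distribution-type function: one should verify, perhaps via a layer-cake / bathtub-principle argument, that among subsets of $I$ of fixed $u$-mass $t$ the one maximizing Lebesgue measure is obtained by taking $\{x\in I: u(x)\le \text{cutoff}\}$, and that this maximal measure is exactly what the integral of the decreasing rearrangement of $u^{-1}\chi_I$ produces; the concavity/monotonicity of $\phi_I$ (which is needed for the supremum manipulations in (i)) would be established along the way.
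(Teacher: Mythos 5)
Your proposal is correct, and for part (i) it follows essentially the paper's own route: the associate space description of Theorem \ref{associate}(i) combined with the standard rearrangement identity $\int_0^t f^*_u(s)\,ds=\sup_{u(E)\le t}\int_E|f|u$, which for $f=u^{-1}\chi_I$ produces $\min(\phi_I(t),|I|)$. The paper finishes more cleanly than you do, by merging the supremum in $t$ with the supremum defining $\phi_I$ into $\sup_{E\subset I}|E|/W^{1/p}(u(E))$, which is verbatim \eqref{quasiconcrara}; in particular your detour through ``the supremum is governed near $t=u(I)$'' and the appeal to $p$-quasi-concavity of $W$ (Theorem \ref{NC}(c)) is unnecessary, and in a standalone reading of the proposition it would even be circular --- to pass from \eqref{conocidodiagonal} to \eqref{quasiconcrara} you just evaluate the supremum at $t=u(E)$ and use $\phi_I(u(E))\ge|E|$, with no extra input. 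For part (ii) you genuinely add something: the paper gives no argument and simply cites Proposition 3.4.4 of \cite{crs:crs}, whereas your computation is a correct self-contained proof --- the norm in $\Gamma^{p'}_u(v)$ with $v(t)=t^{p'}W^{-p'}(t)w(t)$, the split at $t=u(I)$, the exact evaluation $\int_{u(I)}^\infty W^{-p'}(t)w(t)\,dt=W^{1-p'}(u(I))/(p'-1)$ (using $w\notin L^1$, which you correctly note comes from Theorem \ref{NC}(a) in the context where the proposition is applied, since Theorem \ref{associate}(ii) requires it), and the exponent identity $p'/p=p'-1$ showing the tail is automatically comparable to the right-hand side of \eqref{GenHardyNec} and can be absorbed. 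The one step you leave as a sketch, the bathtub identity $\int_0^t(u^{-1}\chi_I)^*_u(s)\,ds=\min(\phi_I(t),|I|)$, is exactly the standard fact the paper itself uses silently in part (i), so it is not a gap.
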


\begin{proof} {\it i)} If $p\leq 1$,   condition \eqref{conocidodiagonal} is equivalent to 
$$
||u^{-1}\chi_{I}||_{\Gamma^{1,\infty}_u(\tilde{w})}||\chi_{I}||_{\Lambda^p_u(w)}\lesssim |I|,
$$
and by  Theorem \ref{associate}, we obtain  that
$$
\sup_{t>0} \frac{\int_0^t (u^{-1}\chi_I)^*_u (s) ds}{W^{1/t}(t)}\lesssim \frac{|I|}{W^{1/p}(u(I))}. 
$$
Now, 
\begin{eqnarray*}
\sup_{t>0} \frac{\int_0^t (u^{-1}\chi_I)^*_u (s) ds}{W^{1/t}(t)}&=&
\sup_{t>0} \frac 1{W^{1/p}(t)} \sup_{u(E)=t} \int_E \chi_I(x) dx
\\
&=&
\sup_{E\subset I} \frac{|E|}{W^{1/p}(u(E)}, 
\end{eqnarray*}
and the result follows. 

\noindent
{\it ii)} See Proposition 3.4.4 in \cite{crs:crs}.

\end{proof}

\begin{remark} 

\noindent
i) It was proved in \cite{crs:crs} that if $u=1$,  (\ref{GenHardyNec}) implies $w\in B_{p, \infty}$ and if $w=1$, then 
 (\ref{GenHardyNec}) implies  $u\in A_p$.  In fact, if $w(t)=t^\alpha$ with $\alpha>0$, then  (\ref{GenHardyNec}) also implies  $u\in A_p$. To see this, we observe that by hypothesis 
$$
\left( \int_0^{u(I)}\left( \frac{\phi_I(t)}{t^{\alpha+1}}\right)^{p'} t^{\alpha}dt \right)^{1/p'} 
\lesssim \frac{|I|}{u(I)^{(\alpha+1)/p}}.
$$
Then, if  $\gamma=\alpha (p'-1)$, 
\begin{align*}
& \left(\int_0^{u(I)} \left(\frac{\phi_I(t)}{t}\right)^{p'}dt \right)^{1/p'}\lesssim \left(\int_0^{u(I)} \left(\frac{\phi_I(t)}{t}\right)^{p'}\left(\frac{u(I)}{t}\right)^{\gamma} dt \right)^{1/p'} \\
&=u(I)^{\gamma/p'}\left( \int_0^{u(I)}\left( \frac{\phi_I(t)}{t^{\alpha+1}}\right)^{p'} t^{\alpha}dt \right)^{1/p'}
\lesssim \frac{|I|}{u^{1/p}(I)},\\
\end{align*}
and hence we obtain the condition for the case $w=1$ previously studied. 

\medskip

\noindent
ii) In fact, the above result can be generalized as follows: If  $W(t)/t$ is quasi-increasing and $w\in B^*_\infty$, then  (\ref{GenHardyNec})  implies $u\in A_p$. 
To see this we first observe that if $w\in B^*_\infty$ then, for every $f$ decreasing, 
$$
\int_0^\infty f(s) \frac{W(s)}s ds \lesssim \int_0^\infty f(s) w(s) ds
$$
and hence
\begin{eqnarray*} 
& &\left( \int_0^{u(I)}\left( \frac{\phi_I(t)}{t}\right)^{p'}dt \right)^{1/p'}= 
\left( \int_0^{u(I)}\left( \frac{\phi_I(t)}{t}\right)^{p'}\frac t{W(t)} \frac{W(t)}tdt \right)^{1/p'}
\\
&\lesssim& 
\left( \int_0^{u(I)}\left( \frac{\phi_I(t)}{t}\right)^{p'}\frac t{W(t)}  w(t)dt \right)^{1/p'}
\\
&=&\left( \int_0^{u(I)}\left( \frac{\phi_I(t)}{W(t)}\right)^{p'}\bigg(\frac {W(t)}{t}\bigg)^{p'-1}  w(t)dt \right)^{1/p'}
\\
&\lesssim&
\bigg(\frac {W(u(I))}{u(I)}\bigg)^{1/p} \frac{|I|}{W(u(I))^{1/p}}=  \frac{|I|}{u(I)^{1/p}}, 
\end{eqnarray*}
from which the result follows. 

\medskip

\noindent
iii) For every interval $I$,  we have that 

$$
\phi_I(t)=|I|-\psi^{-1}_I (u(I)-t), 
$$

\ 

\noindent
where $\psi_I(t)=\sup\{u(F): F\subset I \text{ and } |F|=t\}$. To see this, we first observe that 
$$
\phi_I(t)=\sup\{|F|: F\subset  [0, |I|]  \text{ and } (u\chi_I)^*(F)=t\}, 
$$
where $(u\chi_I)^*(F)=\int_F (u\chi_I)^*(s)ds$. Since $(u\chi_I)^*$ is decreasing we get that 
the above supremum is attained in a set $F=(a , |I|)$,  for some  $a>0$. 

Now, 
$$
t=\int_a^{|I|} (u\chi_I)^*(s)ds= \psi_I (|I|)- \psi_I (a)=u(I)-  \psi_I (a)
$$
and hence $a= \psi_I^{-1}( u(I)-t)$, from which the result follows. 

\end{remark}

\section{Consequences and Applications}

As a consequence of \eqref{conocidodiagonal}, some necessary conditions on  $p$, depending on $w$, were obtained in  \cite{crs:crs}. Following their approach,
we see that the same results can be obtained if we assume  the boundedness of the Hilbert transform on weighted Lorentz spaces. First, we need to define the index $p_w$:

\begin{definition}
Let $0<p<\infty$. We define
$$
p_w=\inf \left\{ p>0: \frac{t^p}{W(t)} \in L^{p'-1} \left((0,1), \frac{dt}{t} \right)  \right\},
$$
where $p'= \infty$, if $0 < p \leq 1$.
\end{definition}

\begin{proposition} Let $0<p<\infty$ and assume that $H:\Lambda^p_u(w) \to \Lambda^{p, \infty}_u(w)$ is bounded. Then $p\geq p_{w}$. Moreover,  if $p_{w}>1$ then $p> p_{w}$.
\end{proposition}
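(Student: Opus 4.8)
The plan is to derive the inequality $p \geq p_w$ directly from the necessary condition \eqref{conocidodiagonal} of Theorem \ref{NC}, and then to upgrade it to the strict inequality $p > p_w$ when $p_w > 1$ by exploiting the openness built into the definition of the infimum together with the extra room available when $p_w > 1$. First I would recall that by Proposition 2.\ref{dualidad}, the boundedness of $H:\Lambda^p_u(w)\to\Lambda^{p,\infty}_u(w)$ implies \eqref{conocidodiagonal} for every interval $I$, and that by the last Proposition of Section 2 this is equivalent — in the case $p>1$ — to the pointwise estimate \eqref{GenHardyNec} involving $\phi_I$, while for $p\le 1$ it is equivalent to \eqref{quasiconcrara}. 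The strategy in both ranges is to specialize $I$ and $E$ (or the test function hidden in the norm) to a Lebesgue point of $u$, so that $\phi_I(t)$ becomes essentially linear in $t$ near the appropriate scale; this turns \eqref{GenHardyNec} into a statement purely about $W$, namely that $t^p/W(t)$ lies in $L^{p'-1}((0,1),dt/t)$, which is exactly the condition defining membership of $p$ in the set whose infimum is $p_w$. Hence $p\ge p_w$.

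Concretely: fix a point $x_0$ where $u$ is approximately continuous with $0<u(x_0)<\infty$, and take $I=I_r$ the interval of length $r$ centered at $x_0$. Then $u(I_r)\approx u(x_0)\,r$ and, for $0<t<u(I_r)$, the extremal set realizing $\phi_{I_r}(t)$ is (up to lower-order corrections) a subinterval of proportional length, so $\phi_{I_r}(t)\approx t/u(x_0)$. Substituting into \eqref{GenHardyNec} and changing variables $t\mapsto u(x_0)^{-1}t$ inside the integral, the factor $|I|/W^{1/p}(u(I))$ on the right and the normalization on the left cancel the scale $r$, and one is left, after letting $r\to 0$, with
$$
\left(\int_0^{1}\left(\frac{t}{W(t)}\right)^{p'} w(t)\,dt\right)^{1/p'}\lesssim \frac{1}{W^{1/p}(1)}<\infty,
$$
and an elementary integration-by-parts / Hardy-type argument (or a direct comparison, using that $W$ is $p$ quasi-concave by Theorem \ref{NC}(c), so $W(t)/t^p$ is essentially decreasing) converts this into $t^p/W(t)\in L^{p'-1}((0,1),dt/t)$. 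Therefore $p$ belongs to the defining set and $p\ge p_w$. For $p\le 1$ the corresponding step is simpler: \eqref{quasiconcrara} at a Lebesgue point yields $W(r)/r^p\lesssim W(s)/s^p$ for $s<r$, i.e.\ $W$ is $p$ quasi-concave, which for $p\le 1$ (where $p'=\infty$ and the condition reads $\sup_{0<t<1} t^p/W(t)<\infty$) again forces $p\ge p_w$.

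For the strict inequality when $p_w>1$, the idea is that if we had $p=p_w$, then $p>1$, so \eqref{GenHardyNec} applies, and the argument above shows $t^p/W(t)\in L^{p'-1}((0,1),dt/t)$ — but the infimum defining $p_w$ need not be attained, and more importantly one can show that the integrability at the endpoint exponent $p=p_w$ is incompatible with \eqref{GenHardyNec} having a \emph{finite} constant uniform in $I$. The mechanism: \eqref{GenHardyNec} is not merely the assertion that the integral over $(0,1)$ is finite, but that the integral over $(0,u(I))$ is controlled by $|I|^{p'}/W(u(I))$ \emph{for all} $I$, including large $I$; feeding in a sequence of intervals $I$ at a Lebesgue point with $u(I)\to\infty$ (using $u\notin L^1$ from Theorem \ref{NC}(a)) and exploiting the self-improving nature of such Hardy inequalities — boundedness of the Hardy-type operator at exponent $p'$ forces boundedness at a strictly larger exponent, equivalently the weight condition holds with $p$ replaced by some $p-\varepsilon$ — one concludes $p-\varepsilon\ge p_w$ for some $\varepsilon>0$, contradicting $p=p_w$. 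This self-improvement is precisely the analogue, in the $\Gamma$-space language, of the openness of the $A_p$ and $B_p$ classes, and it is carried out in \cite{crs:crs} for condition \eqref{conocidodiagonal} alone; since we have shown our hypothesis implies \eqref{conocidodiagonal}, the same conclusion follows verbatim.

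The main obstacle I anticipate is the reduction from \eqref{GenHardyNec}, which is stated with the geometric quantity $\phi_I$, to a clean scalar condition on $W$: one must justify the approximation $\phi_{I_r}(t)\approx t/u(x_0)$ uniformly enough in $t$ over the full range $(0,u(I_r))$ to pass to the limit, which requires care at the Lebesgue point (the approximation degrades near $t=0$ and near $t=u(I_r)$). The cleanest route is probably to avoid pointwise limits and instead choose $u$ to be a genuine power weight or even $u\equiv 1$ on the relevant interval in an auxiliary comparison, or simply to cite the corresponding computation in \cite{crs:crs}, since the passage from \eqref{conocidodiagonal} to the bound $p\ge p_w$ (and its strict form) is already established there and our Theorem \ref{NC}(d) says nothing more than that \eqref{conocidodiagonal} holds under our hypotheses.
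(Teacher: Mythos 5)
Your proposal follows essentially the same route as the paper: the boundedness hypothesis yields condition \eqref{conocidodiagonal} via Theorem \ref{NC} (d), and the passage from that condition to $p\ge p_w$ (with strict inequality when $p_w>1$) is then obtained by invoking the proof of Theorem 3.4.2 in \cite{crs:crs}, which is exactly what the paper's proof does. Your intermediate Lebesgue-point heuristics for $\phi_I$ are not needed (and, as you note, would require extra care near $t=0$ and $t=u(I)$), but your final fallback of citing \cite{crs:crs} coincides with the published argument.
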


\begin{proof}
See the proof of Theorem 3.4.2  in \cite{crs:crs}.
\end{proof}

Another  important consequence of the fact that by Theorem \ref{NC} {\it (a)}  we can assume that both $u$ and $w$ are not integrable functions, is that ${\mathcal{C}}^{\infty}_c$ is dense in $\Lambda^p_u(w)$ as the following theorem shows:

\begin{theorem} \label{realdensity}
If $u  \notin L^1(\mathbb R)$, $w  \notin L^1(\mathbb R^+)$  and $w\in \Delta_2$, then ${\mathcal{C}}^{\infty}_c(\R)$  is  dense in $\Lambda^p_u(w)$.
\end{theorem}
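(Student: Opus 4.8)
The plan is to establish density of $\mathcal{C}^\infty_c(\R)$ in $\Lambda^p_u(w)$ by composing two density results together with a standard mollification argument. Since $w\in\Delta_2$ (together with $u\notin L^1(\R)$, $w\notin L^1(\R^+)$), Proposition~\ref{properties}(c) guarantees that the class $\mathcal{S}_0(u)$ of simple functions with support of finite $u$-measure is dense in $\Lambda^p_u(w)$. Hence it suffices to approximate an arbitrary $f\in\mathcal{S}_0(u)$ by functions in $\mathcal{C}^\infty_c(\R)$, and by linearity it is enough to treat $f=\chi_E$ with $u(E)<\infty$.

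First I would reduce to approximating characteristic functions of \emph{bounded} sets. Given $E$ with $u(E)<\infty$, set $E_N=E\cap(-N,N)$; since $u\notin L^1$ need not be used here, one simply notes $\chi_{E_N}\uparrow\chi_E$ pointwise with $|\chi_{E_N}|\le|\chi_E|\in\Lambda^p_u(w)$, so Proposition~\ref{properties}(b) yields $\|\chi_E-\chi_{E_N}\|_{\Lambda^p_u(w)}\to 0$. Thus we may assume $E$ is bounded. Next, by (inner and outer) regularity of Lebesgue measure one finds a compact set $K$ and an open bounded set $G$ with $K\subset E\subset G$ and $|G\setminus K|$ as small as we wish; since $u$ is locally integrable, $u(G\setminus K)$ is then also small, and again by Proposition~\ref{properties}(b) applied to the dominated difference $\chi_G-\chi_K$ (or directly, since $W\circ u$ is continuous), $\|\chi_G-\chi_K\|_{\Lambda^p_u(w)}$ is small; so it suffices to approximate $\chi_K$ with $K\subset G$ compact.

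Now I would take a standard mollifier $\rho_\varepsilon$ and set $g_\varepsilon=\chi_K*\rho_\varepsilon$. For $\varepsilon$ small these are $\mathcal{C}^\infty$, supported in a fixed bounded neighborhood of $K$ (hence of compact support), with $0\le g_\varepsilon\le 1$, and $g_\varepsilon\to\chi_K$ pointwise a.e. (at every Lebesgue point of $\chi_K$). Applying Proposition~\ref{properties}(b) once more with the dominating function $\chi_{\overline{G}}\in\Lambda^p_u(w)$ gives $\|g_\varepsilon-\chi_K\|_{\Lambda^p_u(w)}\to 0$. Chaining the three approximations proves the theorem.

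The one point requiring care — and the main obstacle — is the repeated invocation of Proposition~\ref{properties}(b): its hypotheses require $u\notin L^1(\R)$, $w\notin L^1(\R^+)$ and $w\in\Delta_2$, which are exactly the standing assumptions of the theorem, so this is available, but one must check each time that the approximating sequences are dominated by a \emph{fixed} element of $\Lambda^p_u(w)$ and converge pointwise a.e.; for the mollification step this uses that $\mathrm{supp}\,g_\varepsilon$ stays inside a fixed bounded set. Alternatively, if one prefers to avoid (b), each step can be done by hand using that $t\mapsto W(u(\cdot))$ is absolutely continuous on bounded sets and dominated convergence for the outer integral $\int_0^\infty(\cdot)^p w$, but invoking (b) is cleaner.
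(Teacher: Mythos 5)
Your proof is correct and follows essentially the same route as the paper: density of $\mathcal{S}_0(u)$ via Proposition~\ref{properties}(c), truncation plus dominated convergence via Proposition~\ref{properties}(b) to reduce to $\chi_E$ with $E$ bounded, and then a compact/open sandwich $K\subset E\subset U$ with $u(U\setminus K)$ small, the error being controlled by $\bigl(\int_0^{u(U\setminus K)}w\bigr)^{1/p}$. The only cosmetic difference is the final step: you mollify $\chi_K$ and invoke Proposition~\ref{properties}(b) once more, whereas the paper takes a smooth Urysohn function $f$ with $f|_K=1$, $f|_{U^c}=0$, $0\le f\le 1$, so that $|\chi_E-f|\le\chi_{U\setminus K}$ yields the estimate in a single step without any limiting argument.
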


\begin{proof} Let ${\mathcal{S}}_c(\R)$ be the space of simple functions with compact support and let us 
note that ${\mathcal{S}}_c(\R)$ is dense in $\Lambda^p_u(w)$.  Indeed, by Proposition~\ref{properties}~(c), we have that ${\mathcal{S}}_0(u)$
is dense in $\Lambda^p_u(w)$. On the other hand, given $f\in {\mathcal{S}}_0(u)$,  the sequence $f_{n}=f\chi_{(-n,n)}\in {\mathcal{S}}_c(\R)$ tends to  $f$ pointwise and hence, by Proposition \ref{properties} (b),  it also converges to $f$ in the quasi-norm $||\cdot||_{\Lambda^p_u(w)}$.

Now, to prove the density of ${\mathcal{C}}^{\infty}_c(\R)$ in ${\mathcal{S}}_c(\R)$ with respect to the topology induced by the quasi-norm of $\Lambda^p_u(w)$, it is enough to show that a characteristic function of a bounded measurable set can be approximated by smooth
functions of compact support. Thus, let $E$ be a bounded measurable set and let $\varepsilon>0$. Take a compact set $K\subset \R$ and a bounded open set $U\subset \R$ such that
$$
K\subset E\subset U \qquad \text{and} \qquad u(U\setminus K)\leq \delta ,
$$
for some small $\delta$ to be chosen.  Then, by Urysohn's lemma, there exists a function $f\in {\mathcal{C}}^\infty_c(\R)$
such that $\left. f\right|_K=1$, $\left. f\right|_{U^c}=0$, and $0\leq f\leq 1$. Then, since $|\chi_E-f|\leq {\chi}_{U \setminus K}$,
we get
$$
||{\chi}_{E}-f||_{\Lambda^p_u(w)}^p \leq ||{\chi}_{U\setminus K}||^p_{\Lambda^p_u(w)}=\int_{0}^{u(U\setminus K)}w(x)\,dx \leq \int_{0}^{\delta}w(x)\,dx.
$$
Therefore, choosing $\delta$ small enough we obtain that 
$$
||{\chi}_{E}-f||_{\Lambda^p_u(w)}\leq \varepsilon.
$$
\end{proof}

\begin{remark} Since $H$ is well defined on functions of ${\mathcal{C}}^{\infty}_c(\R)$, we deduce by  standard methods that the weak-type boundedness of $H^*$ on $\Lambda^p_u(w)$ implies that, for every $f\in \Lambda^p_u(w)$, the limit 
$$
\lim_{\varepsilon\to 0^+} \int_{|x-y| > \varepsilon} \frac{f(y)}{x-y}\,dy
$$
exists  almost everywhere.
\end{remark}

The third consequence of the previous results is the complete characterization of the weak boundedness of $H$ on $\Lambda^p_u(w)$ in the case $u\in A_1$.  We have to mention here that, for the case of the Hardy-Littlewood maximal operator $M$,  it was proved in \cite{crs:crs} that
$$
M: \Lambda^p_u(w) \to \Lambda^{p, \infty}_u(w)  \Longleftrightarrow  M: \Lambda^p(w) \to \Lambda^{p, \infty}(w).
$$
The following theorem shows that the same kind of result occurs for $H$. 
\begin{theorem}\label{a1}
Let  $u\in A_1$ and let $0<p<\infty$. Then

$$
H: \Lambda^p_u(w) \to \Lambda^{p, \infty}_u(w)  \Longleftrightarrow  w\in B_{p,\infty}\cap B^*_{\infty} .
$$
\end{theorem}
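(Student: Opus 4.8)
The plan is to prove both implications, using the $A_1$ hypothesis to pass between the weighted Lorentz setting and the (known) unweighted theory.

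\emph{Necessity.} Assume $H\colon\Lambda^p_u(w)\to\Lambda^{p,\infty}_u(w)$. Theorem~\ref{NC} already provides $u\notin L^1(\R)$, $w\notin L^1(\R^+)$, the $p$-quasiconcavity of $W$ (hence $W\in\Delta_2$), and condition~\eqref{conocidodiagonal}. To obtain $w\in B^*_\infty$ I would invoke Lemma~\ref{necSteinWeiss} (whose hypothesis \eqref{weak 1} is immediate), getting \eqref{log}; then, given $0<t<r$, pick $b>0$ with $u((-b,b))=r$ and $\nu\in(0,1]$ with $u((-b\nu,b\nu))=t$, and use that $A_1$ forces $u((-b\nu,b\nu))\ge c\,\nu\,u((-b,b))$, i.e.\ $\nu\le C\,t/r$, so \eqref{log} yields $W(t)/W(r)\lesssim(1+\log(r/t))^{-p}$ for all $0<t<r$. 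A routine iteration upgrades this logarithmic decay to $W(t)/W(r)\lesssim(t/r)^{\varepsilon_0}$ for some $\varepsilon_0>0$, which is exactly $w\in B^*_\infty$. For $w\in B_{p,\infty}$: if $p\le1$ this is the $p$-quasiconcavity from Theorem~\ref{NC}~(c); if $p>1$, then $B_{p,\infty}=B_p$, and I would extract it from \eqref{conocidodiagonal}\,$\Leftrightarrow$\,\eqref{GenHardyNec} together with the $A_1$/reverse-H\"older control of the function $\phi_I$ from \eqref{fi}, arguing as in \cite{crs:crs} (alternatively, adapt Proposition~\ref{necesaria geometrica} from a single interval to finite families of intervals to prove $M\colon\Lambda^p_u(w)\to\Lambda^{p,\infty}_u(w)$, which by \cite{crs:crs} is equivalent to $w\in B_{p,\infty}$).

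\emph{Sufficiency.} Assume $w\in B_{p,\infty}\cap B^*_\infty$ and $u\in A_1$. Since $w\in B_{p,\infty}$ and $u\in A_1$, the maximal operator is bounded, $M\colon\Lambda^p_u(w)\to\Lambda^{p,\infty}_u(w)$ (by \cite{crs:crs}, or directly: $u\in A_1$ gives the Herz-type estimate $(Mf)^*_u(t)\lesssim f^{**}_u(ct)$, and $P\colon L^p_{\mathrm{dec}}(w)\to L^{p,\infty}(w)$ is precisely the $B_{p,\infty}$ condition). Since $u\in A_1\subset A_\infty$, the Coifman--Fefferman good-$\lambda$ inequality \cite{cf:cf} gives, for each $\gamma>0$, a number $\varepsilon(\gamma)$ with $\varepsilon(\gamma)\to0$ as $\gamma\to0$ and
$$u(\{H^*f>2\lambda,\ Mf\le\gamma\lambda\})\le\varepsilon(\gamma)\,u(\{H^*f>\lambda\}).$$
Applying $W$ to $u(\{H^*f>2\lambda\})\le u(\{Mf>\gamma\lambda\})+\varepsilon(\gamma)\,u(\{H^*f>\lambda\})$, using $W\in\Delta_2$ (which follows from $w\in B_{p,\infty}$) and, crucially, the equivalent form of $B^*_\infty$ that $W(\delta s)\lesssim\delta^{\varepsilon_0}W(s)$ for $0<\delta\le1$, one gets
$$W(u(\{H^*f>2\lambda\}))\lesssim W(u(\{Mf>\gamma\lambda\}))+\varepsilon(\gamma)^{\varepsilon_0}\,W(u(\{H^*f>\lambda\})).$$
Multiplying by $(2\lambda)^p$, taking the supremum over $\lambda>0$, choosing $\gamma$ so small that the last term can be absorbed into the left-hand side, one concludes $\|H^*f\|_{\Lambda^{p,\infty}_u(w)}\lesssim\|Mf\|_{\Lambda^{p,\infty}_u(w)}\lesssim\|f\|_{\Lambda^p_u(w)}$; since $|Hf|\le H^*f$, both $H$ and $H^*$ are bounded (this also yields Remark~\ref{maxim}).

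The delicate point in the absorption step is that it presupposes $\sup_{\lambda>0}\lambda^p W(u(\{H^*f>\lambda\}))<\infty$; I would secure this a priori by first running the argument for $f$ in the dense class of Theorem~\ref{realdensity} (or $\mathcal{S}_0(u)$), after a harmless truncation of $H^*$, and then passing to general $f\in\Lambda^p_u(w)$ by density. I expect the heart of the proof to be exactly this sufficiency direction: the good-$\lambda$ method produces a self-referential error term, and it is precisely the $B^*_\infty$ condition --- through the estimate $W(\delta s)\lesssim\delta^{\varepsilon_0}W(s)$ --- that makes that term absorbable, which explains why $B^*_\infty$, and not merely $B_{p,\infty}$ as for $M$, must enter the characterization.
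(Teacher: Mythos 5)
Your necessity argument is essentially the paper's: Theorem~\ref{NC} plus Lemma~\ref{necSteinWeiss}, the $A_1$ estimate $\nu\lesssim u(S)/u(I)$ to turn \eqref{log} into $W(t)/W(s)\lesssim(1+\log(s/t))^{-p}$, and then an upgrade of this decay (the paper does it via submultiplicativity of $\bar W(\lambda)=\sup_s W(\lambda s)/W(s)$, raising the logarithmic power until it is integrable against $dt/t$ and deducing \eqref{binfty}; your power-decay upgrade $\bar W(\lambda)\lesssim\lambda^{\varepsilon_0}$ is the same trick pushed one step further and is also fine), while for $w\in B_{p,\infty}$ both you and the paper defer to \cite{crs:crs} via \eqref{conocidodiagonal} and \eqref{GenHardyNec}; your parenthetical alternative (families of intervals for $H$) would, however, run into cancellation problems and is not needed.

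The sufficiency direction is where you genuinely diverge. The paper's proof is very short: for $u\in A_1$ one has the rearrangement estimate \eqref{vv}, $(H^*f)^*_u\le Pf^*_u+Qf^*_u$ (\cite{bk:bk,br:br}), and then $B_{p,\infty}$ gives the weak-type bound for $P$ on decreasing functions while $B^*_\infty$ gives it for the conjugate Hardy operator $Q$ (\cite{k:k,n:n}); $H$ is then handled by Fatou. You instead prove the weak-type bound for $M$ on $\Lambda^p_u(w)$ (correct, via Calder\'on/Herz for $u\in A_1$ plus $B_{p,\infty}$) and transfer it to $H^*$ by the Coifman--Fefferman good-$\lambda$ inequality, using the power-decay form of $B^*_\infty$ to make the error term absorbable. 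This is a legitimate alternative route (and it isolates nicely why $B^*_\infty$ must appear), but it buys you an extra technical burden that the paper's route avoids entirely: the absorption needs $\sup_{\lambda}\lambda^pW\bigl(u(\{H^*f>\lambda\})\bigr)<\infty$ a priori. Your proposed fix, ``a harmless truncation of $H^*$,'' does not by itself work: truncating the \emph{values} of $H^*$ only removes large $\lambda$, whereas the possible divergence is at $\lambda\to0$, where $\{H^*f>\lambda\}$ is spatially large. You must either localize in space (restrict to $[-N,N]$ and use a Whitney-localized good-$\lambda$) or prove the a priori bound directly for $f$ in the dense class; the latter is in fact available under your hypotheses, since $u\in A_1$ forces $u([-R,R])\lesssim R$, $B_{p,\infty}$ gives $W(t)\lesssim t^{p}$ (up to an $\varepsilon$), $H^*f(x)\lesssim|x|^{-1}$ for compactly supported bounded $f$, and $H^*$ is bounded on $L^q(u)$ for all $q>1$. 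With that step made explicit your argument closes, and, like the paper's, it yields the $H^*$ statement of Remark~\ref{maxim} as a by-product.
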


\begin{proof} Let us start by proving the necessary condition.  Since, by Theorem \ref{NC},  (\ref{conocidodiagonal}) holds, we can follow  the same argument used in \cite[ Proposition 3.4.4 and Theorem 3.4.8]{crs:crs} to conclude that $w\in B_{p,\infty}$. Let us see now that it is also in $B^*_{\infty}$.

Let $0<t \leq s<\infty$. Then, since $u\notin L^1(\mathbb R)$, there exists $\nu \in (0,1]$ and $b>0$  such that
$$
t=\int_{-b\nu}^{b\nu}u(r)\,dr \leq \int_{-b}^{b}u(r)\,dr =s.
$$
By Lemma \ref{necSteinWeiss} we obtain \eqref{log} and hence 
$$
\frac{W(t)}{W(s)}\lesssim \left(1+\log \frac{1}{\nu}\right)^{-p}.
$$
Let $S=(-b\nu, b\nu)$ and $I=(-b,b)$.
Since $u\in A_1$, we obtain that 
$$\nu=\frac{|S|}{|I|}\lesssim    \frac{u(S)}{u(I)} =  \frac{t}{s} 
$$ and therefore 
$$
\frac{W(t)}{W(s)}\lesssim \left(1+\log \frac{s}{t}\right)^{-p} ,
$$

From here, it follows that the function 
$$
\bar W(\lambda)=\sup_{s>0}\frac{W(\lambda s)}{W(s)}, \qquad 0<\lambda<1, 
$$
is a submultiplicative  function satisfying
$$
\bar W(\lambda)\lesssim \left(1+\log \frac{1}{\lambda}\right)^{-p}
$$
and taking $k\in\mathbb N$ such that $kp>1$, we obtain that
$$
\bar W(\lambda) \le \bar W(\lambda^{1/k})^k\lesssim \left(1+\log \frac{1}{\lambda}\right)^{-kp}.
$$
Consequently, 
\begin{eqnarray*}
\int_0^r \frac{W(t)} t dt &\lesssim& W(r) \int_0^r \left(1+\log \frac{r}{t}\right)^{-kp}  \frac{dt }t\\
&=& W(r) \int_1^\infty  \left(1+\log u\right)^{-kp}  \frac{du }u
\lesssim W(r),
\end{eqnarray*}
and hence, by (\ref{binfty}),   $w\in B^*_{\infty}$.

To prove the converse, we just have to use that if $u\in A_1$, then   (see~\cite{bk:bk, br:br}):
\begin{equation}\label{vv}
(H^*f)^*_u (t)\le \frac 1t \int_0^t f^*_u(s) ds + \int_t^\infty f^*_u(s)\frac{ds}s:= P f^*_u(t)+ Q f^*_u(t).
\end{equation}
whenever the right hand side is finite. 

Now, since $w\in B_{p, \infty}$, we have that
$$
\sup_{t>0}P f^*_u(t) W(t)^{1/p}\lesssim ||f^*_u||_{L^p(w)}=||f||_{\Lambda^p_u(w)},
$$
 and the condition  $w\in B^*_\infty$  implies the same inequality for the operator $Q$; that is (see \cite{k:k}, \cite{n:n})
 $$
\sup_{t>0}Q f^*_u(t) W(t)^{1/p}\lesssim ||f||_{\Lambda^p_u(w)}. 
$$
Therefore, we obtain that 
$$
H^*: \Lambda^p_u(w) \to \Lambda^{p, \infty}_u(w) 
$$
is bounded and  by Fatou's lemma we obtain the result. 
  \end{proof}
  
  With a very similar  proof and using the properties of the class $B_p$, the following result is obtained. 
  
  \begin{theorem}\label{a2}
Let  $u\in A_1$ and let $1<p<\infty$. Then

$$
H: \Lambda^p_u(w) \to \Lambda^{p}_u(w)  \Longleftrightarrow  w\in B_{p}\cap B^*_{\infty} .
$$
\end{theorem}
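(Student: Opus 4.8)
The plan is to mirror the proof of Theorem \ref{a1}, replacing the weak-type estimates by their strong-type counterparts. For the necessity direction, assume $H:\Lambda^p_u(w)\to\Lambda^p_u(w)$ is bounded; then in particular $H:\Lambda^p_u(w)\to\Lambda^{p,\infty}_u(w)$ is bounded, so Theorem \ref{a1} already gives $w\in B_{p,\infty}\cap B^*_\infty$. Since $p>1$, fact (i) recalled in the introduction tells us $B_{p,\infty}=B_p$, hence $w\in B_p\cap B^*_\infty$. (Alternatively, one argues directly: condition \eqref{conocidodiagonal} from Theorem \ref{NC} still holds under the strong-type hypothesis, and combined with the argument of \cite[Proposition 3.4.4 and Theorem 3.4.8]{crs:crs} one extracts $w\in B_p$ directly; the $B^*_\infty$ part is verbatim the computation in the proof of Theorem \ref{a1}, using Lemma \ref{necSteinWeiss}, the $A_1$ condition to pass from $\nu=|S|/|I|$ to $u(S)/u(I)$, submultiplicativity of $\bar W$, and the resulting integrability of $(1+\log(1/\lambda))^{-kp}$.)

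For sufficiency, assume $u\in A_1$ and $w\in B_p\cap B^*_\infty$ with $p>1$. The key input is again the pointwise rearrangement estimate \eqref{vv}: for $u\in A_1$,
$$
(H^*f)^*_u(t)\le Pf^*_u(t)+Qf^*_u(t),
$$
whenever the right-hand side is finite. Now $w\in B_p$ means precisely (fact (iii) of the introduction) that $P:L^p_{\mathrm{dec}}(w)\to L^p(w)$ is bounded, so
$$
\|Pf^*_u\|_{L^p(w)}\lesssim \|f^*_u\|_{L^p(w)}=\|f\|_{\Lambda^p_u(w)}.
$$
For the second term, $w\in B^*_\infty$ is exactly the condition guaranteeing $Q:L^p_{\mathrm{dec}}(w)\to L^p(w)$ boundedly (see \cite{k:k}, \cite{n:n}); note $w\in B^*_\infty$ together with $w\in B_p\subset\Delta_2$ ensures the relevant integrals are finite. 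Combining the two bounds gives $\|H^*f\|_{\Lambda^p_u(w)}\lesssim\|f\|_{\Lambda^p_u(w)}$ for $f$ in the dense class ${\mathcal C}^\infty_c(\R)$ (dense by Theorem \ref{realdensity}, since $u\notin L^1$, $w\notin L^1$, $w\in\Delta_2$), and Fatou's lemma extends the estimate to all of $\Lambda^p_u(w)$, with $H$ controlled by $H^*$.

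The main obstacle — and it is a mild one — is verifying that the strong-type hypothesis really does feed into the necessity argument cleanly: one must check that \eqref{conocidodiagonal} and the logarithmic estimate of Lemma \ref{necSteinWeiss} only used the weak-type boundedness (which is implied by the strong-type one), and that the passage $B_{p,\infty}=B_p$ for $p>1$ is legitimately available. On the sufficiency side, the only point requiring care is the finiteness of $Pf^*_u(t)+Qf^*_u(t)$ so that \eqref{vv} applies: this is where one uses that for $f\in{\mathcal C}^\infty_c$ the rearrangement $f^*_u$ is bounded with controlled decay, together with $w\in B_p\cap B^*_\infty$, before invoking density. Everything else is a direct transcription of the proof of Theorem \ref{a1} with $L^{p,\infty}(w)$ replaced by $L^p(w)$ and $B_{p,\infty}$ by $B_p$.
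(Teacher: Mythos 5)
Your proposal is correct and follows essentially the same route the paper intends for Theorem \ref{a2} (which it presents as ``a very similar proof'' to Theorem \ref{a1} using the properties of $B_p$): necessity via the weak-type case plus $B_{p,\infty}=B_p$ for $p>1$, and sufficiency via the estimate \eqref{vv} with $P$ handled by $w\in B_p$ and $Q$ by $w\in B^*_\infty$, followed by density and Fatou. No gaps worth noting.
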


In the case $p\le 1$, we  obtain the following: 

\begin{theorem}\label{a3}
Let  $u\in A_1$ and let $0<p\le 1$. Then, if  $w\in B_{p}\cap B^*_{\infty}$, we have that 
$$
H: \Lambda^p_u(w) \to \Lambda^{p}_u(w)
$$
is bounded. 
\end{theorem}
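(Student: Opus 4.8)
The plan is to mimic the proof of the sufficiency direction in Theorem~\ref{a1}, replacing the weak-type estimates by strong ones and using the class $B_p$ in place of $B_{p,\infty}$, in the spirit of Theorem~\ref{a2}. First I would check that the situation is non-degenerate and that $H$ makes sense on a dense subclass. Since $u\in A_1$, comparing $Mu$ with averages over large intervals shows $u\notin L^1(\R)$; since $w\in B^*_\infty$, relation~(\ref{binfty}) forces $w\notin L^1(\R^+)$ (otherwise $\int_0^r t^{-1}W(t)\,dt$ grows like $\log r$ while $W(r)$ stays bounded); and $w\in B_p\subset B_{p,\infty}$, so $W$ is $p$ quasi-concave and hence $w\in\Delta_2$. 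Therefore Theorem~\ref{realdensity} applies, ${\mathcal C}^\infty_c(\R)$ is dense in $\Lambda^p_u(w)$, and it suffices to prove the a priori inequality $\|H^*f\|_{\Lambda^p_u(w)}\lesssim\|f\|_{\Lambda^p_u(w)}$ for $f\in{\mathcal C}^\infty_c(\R)$ and then extend by density; the bound for $H$ follows from $|Hf|\le H^*f$, so in fact the stronger statement for $H^*$ is obtained.

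Next I would invoke, since $u\in A_1$, the pointwise rearrangement estimate~(\ref{vv}): $(H^*f)^*_u(t)\le Pf^*_u(t)+Qf^*_u(t)$. Because $0<p\le 1$, the functional $\|G\|_{L^p(w)}=\big(\int_0^\infty G^p w\big)^{1/p}$ satisfies $\|F+G\|_{L^p(w)}^p\le\|F\|_{L^p(w)}^p+\|G\|_{L^p(w)}^p$, so $\|H^*f\|_{\Lambda^p_u(w)}^p=\|(H^*f)^*_u\|_{L^p(w)}^p\le\|Pf^*_u\|_{L^p(w)}^p+\|Qf^*_u\|_{L^p(w)}^p$. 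As $f^*_u$ is decreasing, the matter is thus reduced to the two boundedness statements $P:L^p_{{\rm dec}}(w)\to L^p(w)$ and $Q:L^p_{{\rm dec}}(w)\to L^p(w)$. The first one is exactly item~(iii) in the list of properties of $B_p$ from the Introduction: it holds because $w\in B_p$. For the second one I would use that $w\in B^*_\infty$, this being the strong-type counterpart (on the cone of decreasing functions) of the weak estimate for $Q$ employed in the proof of Theorem~\ref{a1}; see \cite{k:k,n:n}, \cite{cs:cs}. Combining the two gives $H^*:\Lambda^p_u(w)\to\Lambda^p_u(w)$, hence $H:\Lambda^p_u(w)\to\Lambda^p_u(w)$, as claimed. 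Note that $B_p\cap B^*_\infty$ is here only sufficient, not necessary, which is why — unlike in Theorems~\ref{a1} and~\ref{a2} — only one implication is asserted when $p\le 1$.

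The step I expect to be the main obstacle is precisely the boundedness of $Q$ on $L^p_{{\rm dec}}(w)$ for $p\le 1$. For $p>1$ duality reduces it cleanly to a Muckenhoupt-type condition equivalent to $B^*_\infty$; for $p\le 1$ there is no such duality and one argues directly. Testing on $f=\chi_{(0,r)}$, for which $Qf(t)=\log(r/t)\chi_{(0,r)}(t)$, yields the requirement $\int_0^r(\log(r/t))^p w(t)\,dt\lesssim W(r)$; writing the left-hand side (via integration by parts) as $p\int_0^r t^{-1}W(t)(\log(r/t))^{p-1}\,dt$ and splitting the integral at $t=r/2$ shows this is implied by~(\ref{binfty}). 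The delicate part is passing from characteristic functions to arbitrary decreasing $f$, which requires the level-function / Sawyer-type duality machinery for cones of monotone functions, and this is the point where I would rely on the cited literature rather than reproduce the argument.

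Finally, for completeness I would verify that the right-hand side of~(\ref{vv}) is finite for every $f\in\Lambda^p_u(w)$; this is automatic once the two operator bounds are established, since they give $Pf^*_u+Qf^*_u\in L^p(w)$, hence finite $w$-almost everywhere, and therefore everywhere in $(0,\infty)$ as $w$ is a positive weight. The remaining verifications ($\Delta_2$ from $p$ quasi-concavity, the extension from ${\mathcal C}^\infty_c$ to all of $\Lambda^p_u(w)$, and $|Hf|\le H^*f$ pointwise wherever the defining limit exists) are routine.
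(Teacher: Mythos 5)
Your proposal is correct and follows essentially the same route as the paper, which gives no separate argument for this case but indicates it is proved exactly as Theorems \ref{a1} and \ref{a2}: the rearrangement estimate (\ref{vv}) for $u\in A_1$, the bound $P:L^p_{\rm dec}(w)\to L^p(w)$ from $w\in B_p$, and the bound for $Q$ on decreasing functions from $w\in B^*_\infty$ via the cited results of Andersen and Neugebauer. Your additional checks (non-degeneracy, density, the $p$-subadditivity of $\|\cdot\|^p_{L^p(w)}$, and the test on characteristic functions for $Q$) are consistent with, and slightly more detailed than, what the paper records.
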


\begin{remark} \label{maxim} Observe than from the proofs of Theorem \ref{a1} and \ref{a2}, we also have that, if $u\in A_1$, then 
$$
H^*: \Lambda^p_u(w) \to \Lambda^{p, \infty}_u(w)  \Longleftrightarrow  w\in B_{p,\infty}\cap B^*_{\infty}, \qquad p>0, 
$$
and
$$
H^*: \Lambda^p_u(w) \to \Lambda^{p}_u(w)  \Longleftrightarrow  w\in B_{p}\cap B^*_{\infty}, \qquad p>1.
$$

\end{remark}

\ 

Our next application concerns the boundedness of $H$ on the Lorentz spaces $L^{p,q}(u)$. 
In \cite{chk:chk}, Chung, Hunt, and Kurtz  provided a sufficient condition for the boundedness of
$$
H: L^{p,1}(u) \to L^{p,\infty}(u), \qquad 1\le p<\infty. 
$$
We prove that this condition is  necessary and also complete the characterization of the above boundedness for some other exponents.

\begin{theorem}
Let $p,r,  \in (0,\infty), \,\, q,s \in (0,\infty]$.

\begin{enumerate}
\item[{(i)}] 
If $p<1$ or  $p\neq r$, there are no  doubling  weights $u$ such that $H: L^{p,q}(u) \to L^{r,s}(u)$ is bounded.

\item[{(ii)}] If $q\leq 1$, the boundedness
$$
H: L^{1,q}(u) \to L^{1,\infty}(u)
$$
holds if and only if  $u \in A_1$.

\item[{(iii)}] If $q>1$, the boundedness
$$
H: L^{1,q}(u) \to L^{1,\infty}(u)
$$
does not hold for any $u$. 
\end{enumerate}
\end{theorem}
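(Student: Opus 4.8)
The plan is to reduce everything to the weighted Lorentz spaces $\Lambda$ already analyzed in Sections~1--3. The basic observation is that $L^{p,q}(u)=\Lambda^q_u(w_{p,q})$ with $w_{p,q}(t)=t^{q/p-1}$, so that $W_{p,q}(t)=\tfrac{p}{q}\,t^{q/p}$, and that $L^{r,\infty}(u)=\Lambda^{q,\infty}_u(w)$ for \emph{any} weight $w$ with $W^{1/q}(t)\approx t^{1/r}$; in particular $L^{1,q}(u)=\Lambda^q_u(w)$ and $L^{1,\infty}(u)=\Lambda^{q,\infty}_u(w)$ with $w(t)=t^{q-1}$, $W(t)=t^q/q$. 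I will also use two elementary facts: $\|\chi_E\|_{L^{p,q}(u)}\approx u(E)^{1/p}$ for every $q\in(0,\infty]$; and, exactly as in the proof of Proposition~\ref{necesaria geometrica}, if $E\subset I$ and $I'$ is an adjacent interval of the same length, then $|H\chi_E(x)|\ge |E|/(2|I|)$ for every $x\in I'$, whence $(H\chi_E)^*_u(t)\ge |E|/(2|I|)$ for $t<u(I')$ and therefore $\|H\chi_E\|_{L^{r,s}(u)}\gtrsim (|E|/|I|)\,u(I')^{1/r}$ for every $s\in(0,\infty]$.

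For part \textit{(i)} I would feed $f=\chi_E$ with $E\subset I$ into the assumed boundedness and obtain $(|E|/|I|)\,u(I')^{1/r}\lesssim u(E)^{1/p}$. Since $u$ is doubling, $u(I')\approx u(I)$, and a doubling weight on $\R$ is non-integrable, so $u(I)$ takes arbitrarily large values (and, shrinking $I$, arbitrarily small ones). Taking $E=I$ gives $u(I)^{1/r-1/p}\lesssim 1$ for all intervals $I$, which forces $r=p$ and disposes of the case $p\ne r$. With $r=p$ the inequality becomes $u(I)/u(E)\lesssim (|I|/|E|)^p$ for all measurable $E\subset I$; as recalled in the Introduction (after \eqref{ff}), for $p<1$ the Lebesgue differentiation theorem then forces $u\equiv 0$, a contradiction, which disposes of $p<1$.

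For parts \textit{(ii)} and \textit{(iii)} the source and target exponents coincide, so the results of Section~2 apply verbatim. The forward implication holds for every $q$: if $H:L^{1,q}(u)\to L^{1,\infty}(u)$ is bounded, then \eqref{quasiconcrara} reads $u(I)^q/u(E)^q\lesssim (|I|/|E|)^q$, i.e. $u(I)/|I|\lesssim u(E)/|E|$ for all measurable $E\subset I$; letting $E$ shrink to a Lebesgue point of $u$ yields $u(I)/|I|\lesssim u(x)$ for a.e. $x\in I$, which is precisely the $A_1$ condition. For the converse when $q\le 1$: since $u\in A_1$, Theorem~\ref{a1} reduces the boundedness to checking that $w(t)=t^{q-1}\in B_{q,\infty}\cap B^*_\infty$, and this is immediate, because $W(t)/t^q\equiv 1/q$ is (quasi-)decreasing, so $w$ is $q$ quasi-concave and hence lies in $B_{q,\infty}$, and $\int_0^r W(t)/t\,dt=q^{-1}W(r)\lesssim W(r)$, so $w\in B^*_\infty$ by \eqref{binfty}. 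Finally, for $q>1$: we have just seen that any admissible $u$ must lie in $A_1$, so by Theorem~\ref{a1} the boundedness would require $w(t)=t^{q-1}\in B_{q,\infty}$; but $B_{q,\infty}=B_q$ for $q>1$, and $\int_r^\infty (r/t)^q t^{q-1}\,dt=r^q\int_r^\infty t^{-1}\,dt=\infty$, so $w\notin B_q$ and no weight $u$ can work.

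The only genuinely new argument is in part \textit{(i)}: because the two exponents may differ, Theorem~\ref{NC} cannot be quoted directly and the adjacent-interval test must be carried out by hand. The crux — and the one place where the hypothesis ``doubling'' is really used — is the reduction $r=p$, obtained by combining $u(I')\approx u(I)$ with the non-integrability (reverse doubling) of doubling weights; I expect this to be the main (though still routine) obstacle. Once that is in place, parts \textit{(ii)}--\textit{(iii)} are just the identification of $L^{1,q}(u)$ and $L^{1,\infty}(u)$ with $\Lambda$-spaces, together with elementary bookkeeping on the classes $B_p$, $B_{p,\infty}$ and $B^*_\infty$ for the explicit weight $w(t)=t^{q-1}$.
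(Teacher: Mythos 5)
Your proposal is correct and follows essentially the same route as the paper: identify $L^{p,q}(u)$ and $L^{1,\infty}(u)$ with $\Lambda^q_u(t^{q/p-1})$-type spaces, run the adjacent-interval (restricted weak-type) test as in Proposition \ref{necesaria geometrica} to get the necessary conditions in (i)--(ii), and use Theorem \ref{a1} together with the explicit verification that $t^{q-1}\in B_{q,\infty}\cap B^*_\infty$ for $q\le 1$ (and $\notin B_q=B_{q,\infty}$ for $q>1$) for the rest. The only cosmetic deviations are that you obtain non-integrability of $u$ from the doubling (reverse doubling) hypothesis rather than quoting Theorem \ref{NC}(a), and in (iii) you invoke the weak-type characterization of Theorem \ref{a1} with $B_{q,\infty}=B_q$ instead of the paper's reference to Theorem \ref{a2}; both are sound.
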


\begin{proof} The proofs follow the same ideas of \cite[Theorem 3.5.1]{crs:crs}:
\medskip

\noindent (i)
 Since $L^{r,s}(u) \subset L^{r,\infty}(u)$, if $H: L^{p,q}(u) \to L^{r,s}(u)$ we would have that  $H: L^{p,q}(u) \to L^{r,\infty}(u)$ is bounded, which is equivalent to have that 
$ H:\Lambda^{q}_{u} (t^{q/p-1}) \to \Lambda^{q,\infty}_{u} (t^{q/r-1}) $ is bounded. Arguing as in Proposition \ref{necesaria geometrica},    we obtain that 
\begin{equation}\label{expo}
\frac{u^{1/r}(I)}{|I|} \lesssim \frac{u^{1/p}(E)}{|E|},  \,\,\,\,\, E\subset I. 
\end{equation}
 Then, by the  Lebesgue differentiation theorem we get  first that $p\geq 1$.
On the other hand, if  we take $E=I$, then \eqref{expo} implies $u^{1/r-1/p}(I)\lesssim 1$ and hence $p=r$, since $u\not \in L^1$ by Theorem \ref{NC} {\it (a)}. 
\medskip

\noindent (ii)   As before, the boundedness $H: L^{1,q}(u) \to L^{1,\infty}(u)$ can be rewritten as
$H:\Lambda^{q}_{u} (t^{q-1}) \to \Lambda^{q,\infty}_{u} (t^{q-1})$, and by (\ref{quasiconcrara}) we get
$$
\frac{u(I)}{|I|} \lesssim \frac{u(E)}{|E|},  \,\,\,\,\, E\subset I, 
$$
which is equivalent to the condition $A_1$.
On the other hand if $u\in A_1$ then $H:\Lambda^{q}_{u} (t^{q-1}) \to \Lambda^{q,\infty}_{u} (t^{q-1})$ holds by
Theorem \ref{a1}, taking into account that, since $q\le 1$, the weight $t^{q-1}$ satisfies the condition $B_{q, \infty}\cap B^*_{\infty}$. 
\medskip

\noindent
(iii) If $
H: L^{1,q}(u) \to L^{1,\infty}(u)
$ is bounded, we also have  the boundedness of $
H: L^{1,r}(u) \to L^{1,\infty}(u)
$, 
 $r<1$ and thus, 
 by (ii),  we  have that $u\in A_1$. But in this case, Theorem \ref{a2} asserts that   $w$ must be  in $B_q$, while   $t^{q-1}\notin B_q$,  if $q>1$. 
\end{proof}

For the case $p>1$ we finally obtain the following necessary conditions. 

\begin{proposition}

  Let $p>1$ and $0<q\leq s\leq \infty$. If
$$
H: L^{p,q}(u) \to L^{p,s}(u)
$$
is bounded then:

\noindent
\begin{enumerate}
\item[{(1)}] 
 Case $q\leq 1$ and $s=\infty$:\, $\displaystyle{\frac{u(I)}{|I|^{p}} \lesssim \frac{u(E)}{|E|^p}, \,\,\,\,\,
E\subset I}$.

\noindent
\item[{(2)}]  Case $q>1$ or $s<\infty$:\, $u\in A_p$.
\end{enumerate}
\end{proposition}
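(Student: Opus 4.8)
The plan is to read $L^{p,q}(u)$ as the weighted Lorentz space $\Lambda^q_u(w_q)$ with $w_q(t)=t^{q/p-1}$ (hence $W_q(t)=\tfrac pq\,t^{q/p}$), and likewise $L^{p,s}(u)=\Lambda^s_u(w_s)$ and $L^{p,\infty}(u)=\Lambda^{q,\infty}_u(w_q)$; then $H\colon L^{p,q}(u)\to L^{p,s}(u)$ is a boundedness statement to which Theorem~\ref{NC} applies. Part~(1) is then immediate: applying Theorem~\ref{NC}(b) to $H\colon\Lambda^q_u(w_q)\to\Lambda^{q,\infty}_u(w_q)$ gives $\dfrac{W_q(u(I))}{W_q(u(E))}\lesssim\Big(\dfrac{|I|}{|E|}\Big)^{q}$ for $E\subset I$, and since $W_q(u(I))/W_q(u(E))=(u(I)/u(E))^{q/p}$, raising to the power $p/q$ yields $\dfrac{u(I)}{|I|^{p}}\lesssim\dfrac{u(E)}{|E|^{p}}$.

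For Part~(2) the goal is $u\in A_p$, which for $p>1$ is the same as $\|u^{-1}\chi_I\|_{L^{p'}(u)}\,u(I)^{1/p}\lesssim|I|$, because $\|u^{-1}\chi_I\|_{L^{p'}(u)}^{p'}=\int_I u^{-1/(p-1)}$. In every case of Part~(2) one has $L^{p,s}(u)\hookrightarrow L^{p,\infty}(u)$, so $H\colon L^{p,q}(u)\to L^{p,\infty}(u)$ is bounded; Theorem~\ref{NC} then provides the geometric condition of Part~(1) with exponent $p$, the doubling of $u$, and condition~\eqref{conocidodiagonal} for $\Lambda^q_u(w_q)$. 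When $q>1$, Theorem~\ref{associate}(ii) identifies the associate space: the weight $v(t)=t^{q'}W_q^{-q'}(t)w_q(t)\approx t^{q'(1/q-1/p)}$ is easily seen to be a $B_{q'}$ weight, so $\Gamma^{q'}_u(v)=\Lambda^{q'}_u(v)$ with equivalent quasinorms, and matching exponents gives $(\Lambda^q_u(w_q))'=L^{p',q'}(u)$; hence \eqref{conocidodiagonal} reads $\|u^{-1}\chi_I\|_{L^{p',q'}(u)}\lesssim|I|/u(I)^{1/p}$. If, moreover, $q\ge p$, then $q'\le p'$ and the elementary embedding $L^{p',q'}(u)\hookrightarrow L^{p'}(u)$ upgrades this at once to $\|u^{-1}\chi_I\|_{L^{p'}(u)}\lesssim|I|/u(I)^{1/p}$, i.e. $u\in A_p$.

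What remains are the cases $1<q<p$ and $q\le 1$ with $s<\infty$, and this is where the work lies. Here \eqref{conocidodiagonal} plus the geometric condition do not suffice by themselves: the H\"older inequality one would use to move from the $L^{p',q'}$-scale down to the $L^{p'}$-scale (equivalently, to pass from the integral bound it yields on $\phi_I'$ back to the $A_p$ integral) lands exactly on the critical exponent and diverges, so one must bring in the remaining hypothesis. For $1<q<p$, dualize: $\mathcal{C}^\infty_c(\R)$ is dense in $\Lambda^q_u(w_q)$ by Theorem~\ref{realdensity}, $H^*=-H$ on such functions, and $(L^{p,\infty}(u))'=L^{p',1}(u)$ for $p>1$, so $H\colon L^{p',1}(u)\to L^{p',q'}(u)$ is bounded with $q'<\infty$; running Theorem~\ref{NC} and the preceding paragraph on this dual statement adds the geometric condition for the exponent $p'$, namely $u(I)/|I|^{p'}\lesssim u(E)/|E|^{p'}$, and, exploiting that the target $L^{p',q'}(u)$ is now a genuine (strong) Lorentz space, a quantitative strengthening of the doubling of $u$ obtained by testing against $\chi_E$ across the dyadic dilates of $I$ and using $|H\chi_E(x)|\gtrsim |E|/(2^k|I|)$ on $2^kI\setminus 2^{k-1}I$. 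The case $q\le 1$, $s<\infty$ is handled by performing this same multiscale test directly on $H\colon L^{p,q}(u)\to L^{p,s}(u)$.

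In all cases one then has to combine the geometric inequalities for the exponents $p$ and $p'$, the quantitative doubling, and the description of $\phi_I$ through the decreasing rearrangement of $u\chi_I$, in order to recover the sharp bound $\|u^{-1}\chi_I\|_{L^{p'}(u)}\,u(I)^{1/p}\lesssim|I|$. Carrying out this final step — extracting the $L^{p'}$ estimate from conditions that only control $u^{-1}\chi_I$ in the coarser $L^{p',q'}$-scale — is the main obstacle, and it should rely on the self-improvement of the $A_\infty$-type information at hand rather than on any soft functional-analytic argument.
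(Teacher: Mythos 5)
Your part~(1) and your identification of \eqref{conocidodiagonal} with the condition $\|u^{-1}\chi_I\|_{L^{p',q'}(u)}\,u(I)^{1/p}\lesssim|I|$ (via Theorem~\ref{associate}(ii) and the computation $v(t)\approx t^{q'/p'-1}\in B_{q'}$) are correct and coincide with the paper's route; the observation that for $q\geq p$ the embedding $L^{p',q'}(u)\hookrightarrow L^{p'}(u)$ already yields $A_p$ is also fine. The problem is the remaining range, which is exactly where the content of part~(2) lies: for $1<q<p$ (so $q'>p'$), and for $q\le 1$, $s<\infty$, you explicitly stop short of deducing $u\in A_p$ from the $L^{p',q'}$-scale condition, calling it ``the main obstacle'' to be handled by some unspecified self-improvement. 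This is precisely the step the paper supplies by quoting the theorem of Chung, Hunt and Kurtz \cite{chk:chk}, which asserts that the condition $\|u^{-1}\chi_{I}\|_{L^{p',q'}(u)}\|\chi_{I}\|_{L^{p,q}(u)}\lesssim |I|$ is equivalent to the $A_p$ condition. Without this (or a proof of it), your argument does not establish part~(2); as your proposal stands, the conclusion $u\in A_p$ is only obtained for $q\geq p$.

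Moreover, the two devices you propose to bridge the gap do not work as described. First, the duality step is flawed: the adjoint of $H$ with respect to the pairing $\int fg\,u\,dx$ used to define $(\Lambda^p_u(w))'$ is $g\mapsto -u^{-1}H(gu)$, not $-H$, so the boundedness $H\colon L^{p,q}(u)\to L^{p,\infty}(u)$ does not transfer to a boundedness of $H$ itself from $L^{p',1}(u)$ to $L^{p',q'}(u)$, and the subsequent application of Theorem~\ref{NC} to that ``dual statement'' is unjustified. Second, testing against characteristic functions $\chi_E$ (on $I$ or on its dyadic dilates) can only produce restricted-type, set-theoretic conditions of the form \eqref{quasiconcrara}; these are strictly weaker than $A_p$ (indeed, part~(1) of the proposition records exactly such a condition, and it does not imply $A_p$: for instance $u(x)=|x|^{p-1}$ satisfies it while failing $A_p$), so no amount of multiscale testing with characteristic functions can by itself recover the $L^{p'}$ bound you need. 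The genuinely missing ingredient is the self-improving equivalence of \cite{chk:chk}, which must either be cited or reproved.
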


\begin{proof}

\noindent (1)  If $q \leq 1$ and $s=\infty$,  $H: L^{p,q}(u) \to L^{p,\infty}(u)$
can be rewritten as $H:\Lambda^{q}_{u} (t^{q/p-1}) \to \Lambda^{q,\infty}_{u} (t^{q/p-1})$, which by (\ref{quasiconcrara}) implies $\displaystyle{\frac{u(I)}{|I|^{p}} \lesssim \frac{u(E)}{|E|^p}, \,\,\,\,\, E\subset I}$ as we wanted to see. 
\medskip

\noindent (2)   Let $q>1$ or $s<\infty$. By (\ref{conocidodiagonal}), the boundedness  $H: L^{p,q}(u) \to L^{p,s}(u)$ implies that
  
$$
||u^{-1}\chi_{I}||_{L^{p',q'}(u)} ||\chi_{I}||_{L^{p,q}(u)} \lesssim |I|,
$$

\ 

\noindent
for all intervals $I\subset\R$. But in \cite{chk:chk}  it was proved  that this  agrees with   
 the $A_p$ condition. 
\end{proof}

\end{document}